\numberwithin{equation}{section}
\newtheorem{theorem}{Theorem}[section]
\newtheorem{proposition}[theorem]{Proposition}
\newtheorem{lemma}[theorem]{Lemma}
\newtheorem{corollary}[theorem]{Corollary}
\theoremstyle{definition}
\newtheorem{definition}[theorem]{Definition}
\theoremstyle{remark}
\newtheorem*{remark}{Remark}
\begin{document}

\title[Schoenberg correspondence]{Schoenberg Correspondence for $k$-(Super)Positive Maps on Matrix Algebras}

\author{B.V.Rajarama\ Bhat}
\address{B.V.R.~B.: Indian Statistical Institute, Stat-Math. Unit, R V College Post, Bengaluru 560059, India}
\email{bhat@isibang.ac.in}

\author{Purbayan Chakraborty}
\address{P.C.: Universit\'e de Franche-Comt\'e, CNRS, UMR 6623, LmB, F-25000 Besan\c{c}on, France}
\email{purbayan.chakraborty@univ-fcomte.fr}

\author{Uwe Franz}
\address{U.F.: Universit\'e de Franche-Comt\'e, CNRS, UMR 6623, LmB, F-25000 Besan\c{c}on, France}
\email{uwe.franz@univ-fcomte.fr}
\urladdr{https://lmb.univ-fcomte.fr/Franz-Uwe}

\keywords{Schoenberg correspondence, positive semigroup, $k$-positive map, $k$-superpositive map, $k$-entanglement breaking map}
\subjclass[2010]{46L57, 15B48, 46N50}
\begin{abstract}
We prove a Schoenberg-type correspondence for non-unital semigroups which generalizes an analogous result for unital semigroup proved by Michael Sch\"urmann \cite{sch}. It characterizes the generators of semigroups of linear maps on $M_n(\mathbb{C})$ which are $k$-positive, $k$-superpositive, or $k$-entanglement breaking. As a corollary we reprove Lindblad, Gorini, Kossakowski, Sudarshan's theorem\cite{lind,gks}.
We present some concrete examples of semigroups of operators and study how their positivity properties can improve with time.
\end{abstract}
\maketitle

\tableofcontents

%%%%%%%%%%%%%%%%%%%%%%%%%%%%%%%%%%%%%%%%%%%%%%%%%%%%%%%%%%%%%%%%%%%%%%%%%%%%%%%%
\section{Introduction}
\label{sec:intro}

We characterise the generators of semigroups of linear maps acting on the matrix algebras $M_n$ which are contained in various cones relevant to quantum information. In their famous papers Lindblad\cite{lind}, Gorini, Kossakowski, Sudarshan\cite{gks} charac\-terised the generators of semigroups of completely positive identity preserving maps. 
The question of semigroups of other quantum maps (e.g. $k$-positive or $k$-superpositive etc.) arose naturally. 
In this paper we address the question of characterising generators of semigroups of quantum maps, and in particular $k$-(super)positive maps.

Let $C$ be a cone in $\mathbb{R}^n$ and denote by $C^{\circ}$ the cone dual to $C$, cf.\ Definition \ref{def dual cone}. Schneider and Vidyasagar\cite{sv} showed that if $\phi$ is an operator on $\mathbb{R}^n$ then the semigroup $(T_t)_{t\ge 0}= \exp{(t\phi)}_{t\ge 0}$ leaves the cone $C$ invariant if and only if $\langle \phi x|y\rangle \ge 0$ for any $x\in C$ and $y \in C^{\circ}$  satisfying $\langle x|y\rangle = 0$.
This already established an important key result for semigroups of operators and their generators in $\mathbb{R}^n$.
This kind of correspondence is known as `Schoenberg correspondence'.
Michael Sch\"urmann extended this result further to unital Banach algebras, coalgebras, and bialgebras.
We can take advantage of these results to establish the correspondence between a general semigroup of $k$-(super)positive maps and its generator. But the problem is that in all of the previous considerations the semigroup starts at the identity. i.e., at time $t=0$, $T_t=\mathrm{Id}$.
As we know, the identity map though completely positive, is not a $k$-super positive map in general.
So to apply Schoenberg type correspondence for a general $k$-super positive semigroup we need to start at some point other than identity which should lie inside the cone of $k$-super positive maps.
The natural choice is an idempotent. We extend the Schoenberg correspondence that Sch\"urmann proved in his paper\cite{sch} to the case where the semigroup starts at $t=0$ with some idempotent contained in the relevant cone $C$.

In Section $2$, we recall the notions of convex cone, dual cone, and some of their properties without going into details. 
Then we shall briefly discuss the cones of $k$-positive maps, completely positive maps, $k$-superpositive maps and $k$-entanglement breaking maps in the space $\mathrm{Lin}(M_n(\mathbb{C}),M_n(\mathbb{C}))$. 
We end Section $2$ by recalling basic facts on non-unital semigroups, and giving an example of a $k$-superpositive idempotent that is not a conditional expectation onto some unital *-subalgebra of $M_n(\mathbb{C})$.
Section $3$ contains the main result of this paper, i.e., the non-unital version of Sch\"urmann's Schoenberg correspondence for semigroups of operators on a unital Banach algebra.
In Section $4$, we apply this result for different cones of interest in quantum information. Furthermore, we characterise the generators of semigroups of $k$-positive maps, and recover the Linblad, Gorini, Kossakowski, Sudarshan's theorem. 
We discuss a characterisation of positive semigroups on $M_2(\mathbb{C})$ in Section $5$.
We end our paper with Section $6$, where study the four parameter family spanned by the depolarizing channel, the transposition, the conditional expectation onto diagonal matrices, and the identity map, and describe the time evolution of semigroups generated by this family.
The Schoenberg correspon\-dence gives a sufficient and necessary condition for a semigroup to lie inside some cone $C$ for all $t\ge0$. But our examples show that the positivity properties of a semigroup can improve over time, i.e., sometimes one can compute some time $t_0>0$ s.t.\ $T_t$ belongs to a subcone $C_0\subseteq C$ for all $t\ge t_0$.

%%%%%%%%%%%%%%%%%%%%%%%%%%%%%%%%%%%%%%%%%%%%%%%%%%%%%%%%%%%%%%%%%%%%%%%%%%%%%%%%
\section{Preliminaries}
\label{sec:prelim}

%%%%%%%%%%%%%%%%%%%%%%%%%%%%%%%%%%%%%%%
\subsection{Cones}

We recall some definitions and basic facts on cones.
\begin{definition}\label{def dual cone}
    A subset $C$ of a topological vector space $V$ over $\mathbb{R}$ is called a cone if for any two elements $x,y\in C$ and $\alpha \geq 0$ we have $\alpha x + y \in C$. Furthermore, a cone is called solid if it has nonempty interior, and pointed if $C \cap (-C)=\{0\}$.
\end{definition}

\begin{definition}
For a cone $C\subseteq V$, its dual cone $C^{\circ}$ is defined as
\[
C^{\circ}:= \{z\in V'; \langle z, x\rangle \geq 0, x\in C\},
\]
where $V'$ is the topological dual space of $V$.
\end{definition}
The following results are well known so we mention them without proof. Cf.\ \cite[Lemma 3.2, Corollary 3.3]{aliprantis+tourky}.
\begin{proposition} Let $C$ be a closed convex cone in $\mathbb{R}^n$.
Then the following statements are equivalent:
\begin{itemize}
\item[a.]
$C$ is pointed i.e. $C \cap (-C)=\{0\}$.
\item[b.]
$C^{\circ}- C^{\circ}= \mathbb{R}^n$.
\item[c.]
$C^{\circ}$ has non-empty interior.
\item[d.]
$\rm{span} (C^{\circ})= \mathbb{R}^n$ .
\end{itemize}
\end{proposition}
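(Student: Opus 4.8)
The plan is to show all four conditions are equivalent by routing everything through condition (d), $\mathrm{span}(C^\circ) = \mathbb{R}^n$, since the remaining equivalences are then either formal consequences of the convex-cone structure or a single duality argument. I would first dispose of the easy equivalences (b) $\Leftrightarrow$ (c) $\Leftrightarrow$ (d). For any convex cone $K$ one has $K - K = \mathrm{span}(K)$: splitting a linear combination $\sum_i \alpha_i z_i$ into its positive and negative parts and using that $K$ is closed under addition and nonnegative scaling exhibits any element of $\mathrm{span}(K)$ as a difference of two elements of $K$. Applying this with $K = C^\circ$ gives (b) $\Leftrightarrow$ (d) at once. For (c) $\Leftrightarrow$ (d): if $C^\circ$ has nonempty interior it contains an open ball, whose span is all of $\mathbb{R}^n$; conversely, if $\mathrm{span}(C^\circ) = \mathbb{R}^n$ then $C^\circ$ contains a basis $z_1, \dots, z_n$, and the set of strictly positive combinations $\{\sum_i \alpha_i z_i : \alpha_i > 0\}$ is the image of the open positive orthant under a linear isomorphism, hence an open subset of $C^\circ$.

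The substantive step is (a) $\Leftrightarrow$ (d), and this is where I would invoke the only nontrivial input, the bipolar theorem $C^{\circ\circ} = C$ for a closed convex cone. The key identity to establish is
\[
C \cap (-C) = \bigl(\mathrm{span}(C^\circ)\bigr)^\perp .
\]
For the inclusion ``$\subseteq$'', if $x \in C \cap (-C)$ then for every $z \in C^\circ$ both $\langle z, x\rangle \ge 0$ and $\langle z, -x\rangle \ge 0$, forcing $\langle z, x\rangle = 0$; hence $x$ is orthogonal to $C^\circ$ and therefore to its span. For ``$\supseteq$'', if $\langle z, x\rangle = 0$ for all $z \in C^\circ$, then in particular $\langle z, x\rangle \ge 0$ and $\langle z, -x\rangle \ge 0$ for all $z \in C^\circ$, so by the bipolar theorem both $x$ and $-x$ lie in $C^{\circ\circ} = C$. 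With this identity in hand, condition (a) reads $(\mathrm{span}(C^\circ))^\perp = \{0\}$, which is exactly condition (d).

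The hard part will be the ``$\supseteq$'' direction of the displayed identity: it is precisely here that closedness of $C$ is needed, since closedness is what makes $C^{\circ\circ} = C$ hold and lets membership in $C$ be detected by the dual cone. The other implications are formal manipulations of convex cones and should present no real difficulty, so the entire weight of the proposition rests on the bipolar theorem.
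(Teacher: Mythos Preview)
Your proof is correct. The equivalences (b) $\Leftrightarrow$ (c) $\Leftrightarrow$ (d) are handled cleanly via the identity $K - K = \mathrm{span}(K)$ for convex cones and the standard open-orthant argument, and the core step (a) $\Leftrightarrow$ (d) via the identity $C \cap (-C) = \bigl(\mathrm{span}(C^\circ)\bigr)^\perp$ is both correct and well-motivated; you rightly isolate the bipolar theorem as the only place where closedness of $C$ enters.

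As for comparison: the paper does not actually prove this proposition. It states the result as well known and defers to \cite[Lemma 3.2, Corollary 3.3]{aliprantis+tourky}. So your proposal supplies strictly more than the paper does here. Your argument is in the same spirit as the cited reference (duality plus bipolar), and nothing in your write-up is at odds with how the paper uses the result downstream.
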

If we assume $C$ is closed (which is the case for all cones we give as examples) the same results hold if we replace $C$ by $C^{\circ}$ in the above proposition via the Bipolar theorem $(C^{\circ})^{\circ}= C$, see \cite[Theorem 5.5]{sim}.

In the following discussion we will often use Dirac's bra-ket notation to denote rank-one operators. 
To define it in short, we understand $|x\rangle$ to be a vector in some Hilbert space $\mathbb{C}^n$ and $\langle x|$ to be its dual vector.
Then for two vectors $x,y\in \mathbb{C}^n$ we define the rank one operator $|x\rangle \langle y|$ by
\[
|x\rangle \langle y|(z):= \langle y|z \rangle x
\]
for any $z\in \mathbb{C}^n$.
If $\{e_i\}_{i=1}^n$ is the standard basis of $\mathbb{C}^n$ and $\{E_{ij}; 1\le i,j \le n\}$ is the standard matrix units (i.e., $1$ at $ij$-th position and zero everywhere else) then one can easily verify that $E_{ij}=|e_i\rangle\langle e_j|$.

%%%%%%%%%%%%%%%%%%%%%%%%%%%%%%%%%%%%%%%
\subsection{The algebra $\mathrm{Lin}(M_n,M_n)$}

We will consider the *-algebra $M_n$ as a Hilbert space with the Hilbert-Schmidt inner product
\[
\langle A,B\rangle = \mathrm{Tr}(A^*B), \qquad A,B\in M_n,
\]
and we will also consider the Hilbert-Schmidt inner product
\[
\langle R,S\rangle = \mathrm{Tr}(R^*S), \qquad R,S\in\mathrm{Lin}(M_n,M_n),
\]
on $\mathrm{Lin}(M_n,M_n)$.

In addition to the involution $*$ on $\mathrm{Lin}(M_n,M_n)$ defined w.r.t.\ the inner product on $M_n$, where $S\in \mathrm{Lin}(M_n,M_n)$ gets mapped to the unique $S^*$ satisfying the condition
\[
\big\langle S^*(A),B\big\rangle = \big\langle A, S(B)\big\rangle, \quad \forall A,B\in M_n,
\]
we will also consider the \emph{multiplicative} involution $\#: \mathrm{Lin}(M_n,M_n)\to\mathrm{Lin}(M_n,M_n)$ defined by
\[
S^\#(A) = S(A^*)^*
\]
for $S\in\mathrm{Lin}(M_n,M_n)$ and $A\in M_n$.
It is multiplicative in the sense that $(S_1\circ S_2)^{\#}= S_1^{\#} \circ S_2^{\#}$ for $S_1,S_2\in \mathrm{Lin}(M_n,M_n)$.

For a pair of matrices $A,B\in M_n$ we define a linear map $T_{A,B}:M_n\to M_n$ by
\[
T_{A,B}(X) = AXB, \qquad \text{ for } X\in M_n.
\]
Denote by $M_n^\mathrm{op}$ the matrix algebra $M_n$ equipped with the opposite multiplication, $A\cdot_\mathrm{op} B = BA$ for $A,B\in M_n$.

\begin{proposition}\label{isomorphism}
The map $M_n\times M_n\ni (A,B)\mapsto T_{A,B}\in \mathrm{Lin}(M_n,M_n)$ extends to a unique isomorphism of *-algebras $T:M_n\otimes M_n^\mathrm{op}\ni A\otimes B \to T_{A\otimes B}\in \mathrm{Lin}(M_n,M_n)$, which is also an isomorphism of Hilbert spaces.

If we define an involution $\dag:M_n\otimes M_n^\mathrm{op}\to M_n\otimes M_n^\mathrm{op}$ on simple tensors $A\otimes B\in M_n\otimes M_n^\mathrm{op}$ by $(A\otimes B)^\dag = B^*\otimes A^*$ and extend conjugate-linearly, then we have
\[
T_{M^\dag} = (T_M)^\#, \qquad M\in M_n\otimes M_n^\mathrm{op}.
\]
\end{proposition}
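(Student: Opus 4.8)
The plan is to construct $T$ from the universal property of the tensor product, verify the algebraic and metric claims on simple tensors, and extend by (conjugate-)linearity throughout. First I would observe that $(A,B)\mapsto T_{A,B}$ is bilinear, so it factors through a unique linear map $T$ on $M_n\otimes M_n^{\mathrm{op}}$ with $T(A\otimes B)=T_{A,B}$ (uniqueness is automatic since simple tensors span). The homomorphism property is the whole reason the \emph{opposite} algebra appears: a direct computation gives $\big(T_{A_1,B_1}\circ T_{A_2,B_2}\big)(X)=A_1A_2\,X\,B_2B_1=T_{A_1A_2,\,B_2B_1}(X)$, and since multiplication in $M_n\otimes M_n^{\mathrm{op}}$ reads $(A_1\otimes B_1)(A_2\otimes B_2)=A_1A_2\otimes B_2B_1$, this is exactly $T(xy)=T(x)\circ T(y)$.

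For bijectivity, both spaces have dimension $n^4$, so it suffices to check surjectivity. Evaluating on matrix units yields $T_{E_{ij},E_{kl}}(E_{pq})=\delta_{jp}\delta_{kq}E_{il}$, i.e.\ the map sending $E_{jk}\mapsto E_{il}$ and annihilating the remaining units; as the indices vary these are precisely the matrix units of $\mathrm{Lin}(M_n,M_n)\cong M_{n^2}$, so they span and $T$ is onto. To see that $T$ respects the $*$-structure I would compute the adjoint directly from $\langle X, AYB\rangle=\mathrm{Tr}(X^*AYB)=\langle A^*XB^*,Y\rangle$, obtaining $(T_{A,B})^*=T_{A^*,B^*}$; this matches the involution $A\otimes B\mapsto A^*\otimes B^*$ on $M_n\otimes M_n^{\mathrm{op}}$ (one checks directly that this is the genuine, anti-multiplicative $*$-operation there), so $T$ is a $*$-isomorphism.

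For the Hilbert space claim I would use $(T_{A_1,B_1})^*=T_{A_1^*,B_1^*}$ together with the composition rule to get $\langle T_{A_1,B_1},T_{A_2,B_2}\rangle=\mathrm{Tr}\big(T_{A_1^*A_2,\,B_2B_1^*}\big)$, and then the elementary identity $\mathrm{Tr}(T_{P,Q})=\mathrm{Tr}(P)\,\mathrm{Tr}(Q)$ (immediate from the matrix-unit basis) turns this into $\mathrm{Tr}(A_1^*A_2)\,\mathrm{Tr}(B_2B_1^*)=\langle A_1,A_2\rangle\langle B_1,B_2\rangle$. Hence $T$ preserves inner products on simple tensors, therefore everywhere by sesquilinearity, and being bijective it is a Hilbert space isomorphism. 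Finally, for $T_{M^\dag}=(T_M)^\#$ I would check equality on a simple tensor and extend: both $M\mapsto T_{M^\dag}$ and $M\mapsto (T_M)^\#$ are conjugate-linear (as $\dag$ and $\#$ are), so agreement on simple tensors is enough, and indeed $T_{(A\otimes B)^\dag}=T_{B^*,A^*}$ sends $X\mapsto B^*XA^*$, while $(T_{A,B})^\#(X)=(AX^*B)^*=B^*XA^*$.

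I expect no genuine obstacle here, since everything reduces to short computations on simple tensors; the only thing requiring real care is the bookkeeping of the opposite multiplication, so that $T$ comes out multiplicative rather than anti-multiplicative and so that the two compatible pairs of involutions---$(*,*)$, which is anti-multiplicative, versus $(\dag,\#)$, which is multiplicative---are each matched to the correct partner.
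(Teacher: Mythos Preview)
Your proof is correct; the paper itself states this proposition without proof, so there is nothing to compare against. Your argument supplies exactly the routine verifications one would expect: bilinearity to get the linear extension, the opposite-product bookkeeping for multiplicativity, a dimension count plus an explicit spanning set for bijectivity, the adjoint and trace identities for the $*$- and Hilbert-space structures, and a direct check on simple tensors for the $\dag/\#$ intertwining.
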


Denote by $\mathrm{Lin}(M_n,M_n)^{\mathrm{her}}$ the space of hermitianity preserving linear maps, i.e., the space of the linear maps $T:M_n\to M_n$ s.t.\ $T(X^*)=T(X)^*$ for $X\in M_n$, and by $(M_n\otimes M_n^\mathrm{op})^{\mathrm{sa}}$ the space of tensors that are self-adjoint w.r.t.\ $\dag$. Then ismomorphism from the previous proposition also restricts to an isomorphism between $\mathrm{Lin}(M_n,M_n)^{\mathrm{her}}$ and $(M_n\otimes M_n^\mathrm{op})^{\mathrm{sa}}$.

%%%%%%%%%%%%%%%%%%%%%%%%%%%%%%%%%%%%%%%
\subsection{The cones of $k$-positive, $k$-superpositive and $k$-entanglement breaking maps on $M_n$}

For linear maps from $M_n$ to $M_n$ we can define the notions of $k$-positivity and $k$-superpositivity, see \cite{ssz} and the references therein.

In $M_n$ we consider the cone of positive matrices
\[
M_n^+=\{A\in M_n:\exists B\in M_n\text{ s.t. } A=B^*B\},
\]
and a linear map $S:M_n\to M_n$ is called \emph{positive}, if it maps positive matrices to positive matrices. We will denote the cone of positive maps by $\mathcal{PM}$.

\begin{definition}
A linear map $S\in\mathrm{Lin}(M_n,M_n)$ is called \emph{$k$-positive}, if $\mathrm{Id}_{M_k}\otimes S : M_k\otimes M_n \to M_k\otimes M_n$ is positive.
\end{definition}

A linear map $S\in\mathrm{Lin}(M_n,M_n)$ is called \emph{completely positive}, if it is $k$-positive for all $k\in \mathbb{N}$. It is known that a linear map is completely positive if and only if it is $n$-positive and in this case it admits a Kraus representation
\[
S(X) = \sum_{i=1}^p L_i^* X L_i, \qquad X\in M_n,
\]
with certain matrices $L_1,\ldots,L_p\in M_n$.

\begin{definition}
A map $S\in\mathrm{Lin}(M_n,M_n)$ is called \emph{$k$-superpositive}, if it admits a Kraus representation $S(X) = \sum_{i=1}^p L_i^* X L_i$ where all the matrices $L_1,\ldots,L_p$ have rank less than or equal to $k$.
\end{definition}

We will denotes the cones of $k$-positive and $k$-superpositive maps by $\mathcal{P}_k$ and $\mathcal{S}_k$, resp. We have $\mathcal{PM}=\mathcal{P}_1$. Denoting by $\mathcal{CP}$ the cone of completely positive maps, we have furthermore $\mathcal{P}_n=\mathcal{CP}=\mathcal{S}_n$.

\begin{definition}
A map $S\in \mathrm{Lin}(M_n, M_n)$ is called entanglement breaking if for any $X\in (M_k\otimes M_n)^+$ and $k\ge 1$, $(\mathrm{Id}_k \otimes S)X $ is separable (i.e., in the closed convex hull of $M_k^+\otimes M_n^+$). 
\end{definition}
We denote the set of all entanglement breaking maps on $M_n$ by $\mathcal{EB}$.
It can be shown that the cone of $1$-superpositive maps coincides with the set of entanglement breaking maps.

\begin{proposition}
$\mathcal{S}_1= \mathcal{EB}$.
\end{proposition}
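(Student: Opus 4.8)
The plan is to prove the two inclusions $\mathcal{S}_1\subseteq\mathcal{EB}$ and $\mathcal{EB}\subseteq\mathcal{S}_1$ separately, using the Kraus representation directly for the first and passing through the Choi matrix for the second. For the inclusion $\mathcal{S}_1\subseteq\mathcal{EB}$, I would start from a map $S\in\mathcal{S}_1$ with a Kraus representation $S(X)=\sum_{i=1}^p L_i^* X L_i$ in which every $L_i$ has rank at most one, so $L_i=|u_i\rangle\langle v_i|$ for suitable vectors $u_i,v_i$. Amplifying and using $(\mathrm{Id}_k\otimes S)(A\otimes Y)=\sum_i (I_k\otimes L_i^*)(A\otimes Y)(I_k\otimes L_i)$ on simple tensors, I get for every $X\in(M_k\otimes M_n)^+$
\[
(\mathrm{Id}_k\otimes S)(X)=\sum_{i=1}^p (I_k\otimes L_i^*)\,X\,(I_k\otimes L_i),\qquad k\ge 1.
\]
The key observation is that the range of $I_k\otimes L_i^* = I_k\otimes|v_i\rangle\langle u_i|$ is contained in $\mathbb{C}^k\otimes\mathrm{span}\{v_i\}$, so each summand is a positive operator supported on $\mathbb{C}^k\otimes\mathrm{span}\{v_i\}$ and must therefore factor as $Z_i\otimes|v_i\rangle\langle v_i|$ with $Z_i\in M_k^+$. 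Each summand is thus a simple tensor of positive matrices, and the sum lies in the cone generated by $M_k^+\otimes M_n^+$; hence $(\mathrm{Id}_k\otimes S)(X)$ is separable and $S\in\mathcal{EB}$.

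For the reverse inclusion $\mathcal{EB}\subseteq\mathcal{S}_1$, let $S\in\mathcal{EB}$. Since separable operators are in particular positive, $(\mathrm{Id}_k\otimes S)(X)\ge 0$ for all positive $X$ and all $k$, so $S$ is completely positive; in particular it admits a Kraus representation, and its Choi matrix $C_S=(\mathrm{Id}_n\otimes S)(|\Omega\rangle\langle\Omega|)=\sum_{i,j}E_{ij}\otimes S(E_{ij})$, where $|\Omega\rangle=\sum_i e_i\otimes e_i$, is positive. Applying the entanglement-breaking property with $k=n$ to the positive operator $|\Omega\rangle\langle\Omega|$ shows that $C_S$ is separable, and since the separable cone is closed in this finite-dimensional setting, I can write $C_S=\sum_{i}|\alpha_i\rangle\langle\alpha_i|\otimes|\beta_i\rangle\langle\beta_i|$, a sum of product rank-one positive operators. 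The Choi--Jamio\l{}kowski correspondence then turns this decomposition into a Kraus representation of $S$ whose Kraus operators are obtained by reshaping the product vectors $\alpha_i\otimes\beta_i$; because each such vector is a simple tensor, the reshaped matrix has rank one, so $S\in\mathcal{S}_1$.

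The main obstacle I anticipate is the bookkeeping in the second inclusion: I must fix the vectorization/reshaping convention so that a product vector $\alpha\otimes\beta$ appearing in $C_S$ corresponds precisely to a rank-one Kraus operator (up to a conjugation or transpose, which does not change the rank), and then check that the separable decomposition of $C_S$ genuinely reassembles into $S$ under this correspondence rather than into some other map with the same Choi matrix. By contrast, the first inclusion is comparatively routine once one justifies that a positive operator whose support has a one-dimensional second leg necessarily factors as $Z_i\otimes|v_i\rangle\langle v_i|$ with $Z_i\ge 0$.
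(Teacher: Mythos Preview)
Your argument is essentially correct and is in fact the standard proof of this result; the paper, however, gives no proof at all and simply cites \cite{hsr} (Horodecki--Shor--Ruskai). So you have supplied more than the paper does.

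Two small remarks on the second inclusion. First, invoking closedness of the separable cone is not quite the right step for obtaining a \emph{finite} decomposition $C_S=\sum_i |\alpha_i\rangle\langle\alpha_i|\otimes|\beta_i\rangle\langle\beta_i|$; what you actually need is that in finite dimensions the convex cone generated by pure product states is already closed (image of a compact set, hence Carath\'eodory applies), so every separable operator is a finite positive combination of rank-one product operators. Second, your worry about the reshaping convention is well placed but easily resolved: with the paper's convention $C_S=\sum_{i,j}E_{ij}\otimes S(E_{ij})$ and $S(X)=\sum_\ell L_\ell^*XL_\ell$, one checks $C_S=\sum_\ell |w_\ell\rangle\langle w_\ell|$ with $|w_\ell\rangle=\sum_i e_i\otimes L_\ell^* e_i$, so $|w_\ell\rangle$ is a product vector $\alpha\otimes\beta$ precisely when $L_\ell^*=|\beta\rangle\langle\overline{\alpha}|$ has rank one. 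Since any two rank-one decompositions of a positive matrix yield equivalent Kraus representations, the separable decomposition of $C_S$ does indeed give back $S$ with rank-one Kraus operators.
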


\begin{proof}
\cite{hsr}.
\end{proof}

In the terminology of \cite{cmw} we can define $k$-entanglement breaking maps.
\begin{definition}
A linear map $S \in \mathrm{Lin}(M_n, M_n)$ is called $k$-entanglement breaking if it is $k$-positive and $(\mathrm{Id}_k \otimes S)X$ is separable whenever $X\in (M_k\otimes M_n)^+$.
\end{definition}
We denote the cone of $k$-entanglement breaking maps by $\mathcal{EB}_k$. We have $\mathcal{EB}=\mathcal{EB}_n$.

These cones satisfy following chains of inclusions:
\begin{itemize}
\item [(i)]
$\mathcal{EB}=\mathcal{S}_1 \subset \mathcal{S}_k\subset \mathcal{S}_n=\mathcal{CP}=\mathcal{P}_n\subset \mathcal{P}_k\subset \mathcal{P}_1=\mathcal{PM}$,
\item[(ii)]
$\mathcal{EB}=\mathcal{EB}_n\subset \mathcal{EB}_k \subset \mathcal{P}_k \subset \mathcal{P}_1=\mathcal{PM}$,
\item[(iii)]
$\mathcal{EB}=\mathcal{S}_1 \subset \mathcal{S}_k \subset \mathcal{EB}_k^{\circ} \subset \mathcal{P}_1=\mathcal{PM}$.
\end{itemize}
It is easy to see that $\mathcal{P}_k$ is a closed cone and the cone $\mathcal{EB}_k$ is also closed, see \cite[Theorem 3.12]{dms}. Furthermore we have the duality relations that $\mathcal{P}_k^{\circ}=\mathcal{S}_k$ for $k=1,\ldots , n$, cf.\ \cite{ssz}.
In summary,
\[
\mathcal{P}_k^\circ = \mathcal{S}_k, \quad \mathcal{S}_k^{\circ}= \mathcal{P}_k \quad \text{and} \quad (\mathcal{EB}_k^{\circ})^{\circ}=\mathcal{EB}_k
\]
for k=1, \ldots, n.

In Subsection \ref{subsec-cones}, we will show that all these cones are solid and closed under composition.

%%%%%%%%%%%%%%%%%%%%%%%%%%%%%%%%%%%%%%%
\subsection{Non-unital semigroups}

We will be interested in semigroups of linear operators $(T_t)_{t\ge 0}$ (on some Banach space $V$), which do not start from the identity, i.e. we have $T_sT_t=T_{s+t}$ for all $s,t\ge 0$, but not necessarily $T_0=\mathrm{Id}$.
We still want $t\mapsto T_t$ to be continuous.

The semigroup property implies
\[
T_0^2=T_0
\]
i.e., $T_0$ is idempotent. Then we can decompose $V$ as $V=\mathrm{Im}(T_0)\oplus\mathrm{Ker}(T_0)$, where $\mathrm{Im}(T_0)=T_0(V)={\rm Ker}(\mathrm{Id}_V-T_0)$, and $\mathrm{Ker}(T_0)=\mathrm{Im}(\mathrm{Id}_V-T_0)=(\mathrm{Id}_V-T_0)(V)$ We assume that $T_0$ is bounded, so both subspaces are closed. With respect to this decomposition $T_0$ has the form
\[
T_0 =
\left(\begin{array}{cc}
\mathrm{Id}_{\mathrm{Im}(T_0)} & 0 \\
0 & 0
\end{array}\right)
\]

Furthermore, the semigroup property implies $T_0T_t=T_tT_0 = T_t$ for all $t\ge 0$. Therefore
\[
\mathrm{Ker}(T_0)\subseteq \mathrm{Ker}(T_t) \quad\mbox{ and }\quad \mathrm{Im}(T_t)\subseteq\mathrm{Im}(T_0).
\]
W.r.t.\ the decomposition $V=\mathrm{Im}(T_0)\oplus\mathrm{Ker}(T_0)$ we can write the $T_t$ as
\[
T_t =
\left(\begin{array}{cc}
\widetilde{T}_t & 0 \\
0 & 0
\end{array}\right)
\]
with some linear operators $\widetilde{T}_t\in B(\mathrm{Im}(T_0))$, which form a continuous semigroup $(\widetilde{T}_t)_{t\ge 0}$ with initial value $\widetilde{T}_0=\mathrm{Id}_{\mathrm{Im}(T_0)}$. This allows to extend classical results on unital semigroups to the non-unital case.

In our examples, if $T_0$ is a conditional expectation onto some unital *-subalgebra,  then we are lead to study semigroups $(\tilde{T}_t)_{t\ge 0}$ that preserve the corresponding cones of $T_0(M_n)\subseteq M_n$.

But in general $T_0$ need not be a conditional expectation, as the example in the following subsection shows.

%%%%%%%%%%%%%%%%%%%%%%%%%%%%%%%%%%%%%%%
\subsection{Examples of $k$-superpositive semigroups}

Since the identity map $\mathrm{Id}:M_n\to M_n$ is not $k$-superpositive for $k<n$, there exists no $k$-superpositive semigroup $(T_t:M_n\to M_n)_{t\ge0}$ with $T_0={\rm id}$. But there do exist semigroups of $k$-superpositive linear maps on $M_n$ that start with an idempotent $k$-superpositive map $T_0$. Very simple examples are given by $T_0(X) = PXP$ with $P$ a $k$-dimensional orthogonal projection.

Another class of examples are semigroups $T_t:M_2\otimes M_n\to M_2\otimes M_n$ of the form
\[
T_t\left(\left(\begin{array}{cc}
A & B \\
C & D
\end{array}\right)\right)
=
\left(\begin{array}{cc}
S_t(A) & 0 \\
0 & \alpha\big(S_t(A)\big)
\end{array}\right)
\]
with $(S_t)_{t\ge 0}$ a $k$-superpositive semigroup acting on $M_n$ and $\alpha:M_n\to M_n$ any $k$-superpositive linear map. If $\alpha$ is not a *-homomorphism on $\mathrm{Im}(S_0)$, then $T_0$ is a $k$-superpositive idempotent whose image is not a *-subalgebra, and so $T_0$ is not a conditional expectation.

%%%%%%%%%%%%%%%%%%%%%%%%%%%%%%%%%%%%%%%%%%%%%%%%%%%%%%%%%%%%%%%%%%%%%%%%%%%%%%%%
\section{A Schoenberg correspondence for general non-unital semigroups}

We give a non-unital version of \cite[Lemma 2.1]{sch}.

For $X$ a Banach space, we will denote by $B_{X'}$ and $S_{X'}$ the unit ball and the unit sphere of the dual space $X'$. If $C\subseteq X$ is a cone in $X$ then $C^{\circ}$ denotes the dual cone in $X'$, i.e.
\[
C^{\circ} = \{\varphi\in X'; \forall v\in C, \varphi(v)\ge 0\}.
\]
Note that we have (for $C$ a closed convex cone)
\[
(C^{\circ})^{\circ} = C.
\]

\begin{theorem}\label{thm-main}
Let $A$ be a real Banach algebra with a closed convex cone $C\subseteq A$ with non-empty interior. Let $a_0\in C$ be an idempotent such that for any $c\in C$, we have $a_0ca_0\in C$.

Furthermore, it is assumed that for any $c\in C$ we have $c^n\in C$ for $n\ge 1$.

Then, for any $b\in A$ such that $ba_0=a_0b=b$, the following statements are equivalent.

\begin{description}
\item[(i)]
$b$ is $a_0$-conditionally positive on $C^{\circ}$, i.e., $\varphi(b)\ge 0$ for all $\varphi\in C^{\circ}$ with $\varphi(a_0)=0$.
\item[(ii)]
$\exp_{a_0}(tb):=\lim_{n\to\infty}\left(a_0+\frac{tb}{n}\right)^n\in C$ for all $t\ge 0$.
\end{description} 
\end{theorem}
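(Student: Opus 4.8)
The plan is to reduce the claim to Sch\"urmann's unital Schoenberg correspondence \cite[Lemma 2.1]{sch}, applied inside the ``corner'' cut out by the idempotent $a_0$. Let $A_0 := a_0 A a_0$ and let $E\colon A\to A_0$ denote the bounded idempotent $E(x)=a_0xa_0$. Since $A_0=\ker(\mathrm{Id}_A-E)$ it is a closed subalgebra of $A$, and $a_0^2=a_0$ makes it a unital real Banach algebra with unit $a_0$. The hypothesis $ba_0=a_0b=b$ yields $b=a_0ba_0\in A_0$, and expanding $(a_0+tb/n)^n$ by the binomial theorem while using $a_0b=ba_0=b$ and $a_0^2=a_0$ gives $(a_0+tb/n)^n=a_0+\sum_{j=1}^n\binom{n}{j}n^{-j}(tb)^j$. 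Letting $n\to\infty$ shows that $\exp_{a_0}(tb)$ exists and equals the ordinary exponential $\sum_{j\ge0}(tb)^j/j!$ (with $b^0:=a_0$) computed in the unital Banach algebra $A_0$; in particular $\exp_{a_0}(tb)\in A_0$ for every $t\ge0$.

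Next I would verify that the triple $(A_0,C_0,a_0)$ with $C_0:=C\cap A_0$ meets the hypotheses of the unital correspondence. The set $C_0$ is a closed convex cone as an intersection of $C$ with the closed subspace $A_0$, it contains the unit $a_0\in C$, and it is power-closed because $c\in C_0$ implies $c^n\in C$ (by hypothesis) and $c^n\in A_0$ (as $A_0$ is a subalgebra). The only delicate point is that $C_0$ must have non-empty interior relative to $A_0$. Here I would use that $E$ is a bounded surjection onto the Banach space $A_0$, hence open by the open mapping theorem; since the hypothesis $a_0Ca_0\subseteq C$ gives $E(C)\subseteq C\cap A_0=C_0$, the image $E(\mathrm{int}_A C)$ is a non-empty open subset of $A_0$ contained in $C_0$.

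It then remains to identify condition (i) with the ``corner'' conditional positivity that Sch\"urmann's theorem produces for $b\in A_0$, namely $(\mathrm{i}')$: $\psi(b)\ge0$ for all $\psi\in C_0^{\circ}$ (dual cone in $A_0'$) with $\psi(a_0)=0$. For $(\mathrm{i})\Rightarrow(\mathrm{i}')$ I would pull a corner functional $\psi$ back to $\tilde\psi:=\psi\circ E\in A'$; the inclusion $a_0Ca_0\subseteq C_0$ forces $\tilde\psi\in C^{\circ}$, while $\tilde\psi(a_0)=\psi(a_0)$ and $\tilde\psi(b)=\psi(a_0ba_0)=\psi(b)$ transport the hypothesis. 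For the converse I would restrict $\varphi\in C^{\circ}$ to $A_0$, noting $\varphi|_{A_0}\in C_0^{\circ}$ and that restriction preserves the values at $a_0$ and at $b$. Combining this equivalence with the fact that $\exp_{a_0}(tb)\in C$ if and only if it lies in $C\cap A_0=C_0$, Sch\"urmann's unital result $\big(e^{tb}\in C_0\ \forall t\ \Leftrightarrow\ (\mathrm{i}')\big)$ closes the loop $(\mathrm{i})\Leftrightarrow(\mathrm{i}')\Leftrightarrow(\mathrm{ii})$.

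The main obstacle I anticipate is precisely the relative-interior step: one must be sure that passing to the compression $a_0(\cdot)a_0$ does not collapse the cone to something with empty interior in $A_0$, and it is the interplay of the open mapping theorem with the standing hypothesis $a_0Ca_0\subseteq C$ that saves the argument. The duality bookkeeping between $C^{\circ}$ and $C_0^{\circ}$ is conceptually routine but must be carried out carefully, since it is exactly the hypothesis $a_0Ca_0\subseteq C$ (and not merely $a_0\in C$) that guarantees the pulled-back functional $\psi\circ E$ stays in the dual cone $C^{\circ}$.
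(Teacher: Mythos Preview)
Your reduction to the unital corner algebra is correct and is a genuinely different route from the paper's argument. The paper proves the implication (i)$\Rightarrow$(ii) directly inside $A$: using Banach--Alaoglu compactness it shows that for any interior point $c\in C$ and any $\varepsilon>0$ one has $a_0+\tfrac{b+\varepsilon c}{n}\in C$ for all large $n$; only then does it invoke $a_0Ca_0\subseteq C$, to replace $c$ by $a_0ca_0$ so that the displayed element equals $(a_0+\tfrac{b+\varepsilon a_0ca_0}{n})$, which now lies in $C$ and has the right algebraic form to be raised to the $n$-th power and sent to the limit. You instead front-load the hypothesis $a_0Ca_0\subseteq C$, using it twice: once (via the open mapping theorem applied to $E(x)=a_0xa_0$) to guarantee that $C_0=C\cap A_0$ has non-empty interior in $A_0$, and once to show that $\psi\circ E\in C^{\circ}$ when $\psi\in C_0^{\circ}$. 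Your approach is more conceptual---it explains transparently why the non-unital statement is equivalent to Sch\"urmann's unital one and isolates exactly where each hypothesis is consumed---while the paper's proof is self-contained and does not depend on matching Sch\"urmann's precise hypotheses. Both arguments ultimately rest on the same compactness/bipolar machinery; yours just packages it inside the cited lemma.
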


\begin{remark}
One might wonder, if the condition that $a_0ca_0\in C$ for all $c\in C$ could be weakened or omitted. We use this condition in step IV of the proof below, and we have not been able to remove it. But we do not know of any counter-exemple that would show that it can not be omitted, either.
\end{remark}

\begin{proof}

\noindent
\textbf{(ii)}$\Rightarrow$\textbf{(i)}: This follows by diffentiation at $t=0$. If $\varphi\in C^{\circ}$ is such that $\varphi(a_0)=0$, then
\[
\varphi(b) = \lim_{\genfrac{}{}{0pt}{2}{t\to0}{t>0}}\varphi\left(\frac{\exp_{a_0}(tb)-\exp_{a_0}(0)}{t}\right) = 
\lim_{\genfrac{}{}{0pt}{2}{t\to0}{t>0}}\frac{1}{t}\varphi\big(\exp_{a_0}(tb)\big)\ge 0,
\]

\noindent
\textbf{(i)}$\Rightarrow$\textbf{(ii)}:
We have
\begin{equation}
\exp_{a_0}(tb) := \lim_{n\to\infty} \left(a_0+\frac{tb}{n}\right)^n = a_0 + \sum_{n=1}^\infty \frac{(tb)^n}{n!}
\end{equation}
and we want to show that this quantity is positive for $t\ge0$ if $b$ satisfies condition (i). Without loss of generality we can take $t=1$.

We will prove in four steps that $\exp_{a_0}(b)$ is positive.

\noindent
\textbf{Step I:} For any interior point $c\in C$ there exists $\delta>0$ such that
\[
\forall \varphi\in C^{\circ}, \quad \varphi(c)\ge \delta\|\varphi\|.
\]
Indeed, let $c\in C$ be an interior point. Then there exists $\delta>0$ such that $c+\delta B_{A}\subseteq C$, where $B_A$ is the unit ball in A.
Therefore for $v\in B_{A}$,
\[
\varphi(c\pm\delta v) \ge 0,
\]
and
\[
\varphi(c)\ge \delta \sup_{v\in B_{A}} |\varphi(v)| = \delta\|\varphi\|.
\]

\noindent
\textbf{Step II:} for any $\rho>0$ there exists $\eta>0$ such that
\[
\forall \varphi\in C^{\circ}\cap B_{A'}, \quad \varphi(a_0) < \eta\Rightarrow \varphi(b)>-\rho. 
\]
Indeed, fix $\rho>0$ and set
\[
V_n(\rho) = \left\{\varphi\in C^{\circ}\cap B_{A'}; \varphi(a_0) \le \frac{1}{n} \text{ and }\varphi(b) \le - \rho\right\}
\]
By the $a_0$-conditional positivity of $b$ w.r.t.\ $C^{\circ}$, we have
\[
\bigcap_{n\ge 1} V_n(\rho) = \emptyset.
\] 
Since $C^{\circ}\cap B_{A'}$ is compact by the Banach-Alaoglu theorem in the weak-* topology, and since the $V_n(\rho)\subseteq C^{\circ}\cap B_{A'}$ are weak-* closed, there exists $n_0\in\mathbb{N}$ such that $V_{n_0}(\rho)=\emptyset$. Take $\eta=\frac{1}{n_0}$.

\noindent
\textbf{Step III:}
For any $\varepsilon>0$ and $c\in C$ an interior point,
there exists $n_0\in\mathbb{N}$ such that for all $n\ge n_0$,
\[
\forall \varphi\in C^{\circ}\cap S_{A'},\quad \varphi\left(a_0+\frac{b+\varepsilon c}{n}\right) \ge 0,
\]
where $S_{A'}$ denotes the unit sphere in $A'$. 
Let $\delta>0$ be the real number guaranteed by Step I such that $\varphi(c)\ge \delta \|\varphi\|$ for all $\varphi\in C^{\circ}$. Let $\eta>0$ be the real number guaranteed by Step II such that for all $\varphi\in C^{\circ}\cap B_{A'}$ with $\varphi(a_0) < \eta$ we have $\varphi(b)\ge-\varepsilon\delta$. 

Let $\varphi\in C^{\circ}\cap S_{A'}$. We distinguish two cases, according to the value of $\varphi$ on $a_0$.

\noindent
\textbf{Case $\varphi(a_0)<\eta$:} in this case we have
\[
\varphi\left(a_0+\frac{b+\varepsilon c}{n}\right) = \underbrace{\varphi(a_0)}_{\ge 0} + \frac{1}{n}\big(\underbrace{\varphi(b)}_{\ge -\varepsilon\delta}+\varepsilon\underbrace{\varphi(c)}_{\ge \delta}\big) \ge 0.
\]

\noindent
\textbf{Case $\varphi(a_0)\ge\eta$:} now we get
\[
\varphi\left(a_0+\frac{b+\varepsilon c}{n}\right) = \underbrace{\varphi(a_0)}_{\ge \eta} + \frac{1}{n}\varphi(b+\varepsilon c) \ge \eta-\frac{\|b+\varepsilon c\|}{n},
\]
which is positive as soon as $n \ge \frac{\|b+\varepsilon c\|}{\eta}$.

\noindent
\textbf{Step IV:}
By the Bipolar theorem, this means that for any $\varepsilon>0$ and $c\in C$ an interior point there exists an $n_0\in\mathbb{N}$ such that for all $n\ge n_0$, $a_0+\frac{b+\varepsilon c}{n}\in C$.
Then we have $a_0\left(a_0+\frac{b+\varepsilon c}{n}\right)a_0=a_0+\frac{b+\varepsilon a_0ca_0}{n}\in C$.

Since $C$ is stable under taking powers and closed, we get
\[
\exp_{a_0}(b+\varepsilon a_0ca_0)=\lim_{n\to\infty}\left(a_0+\frac{b+\varepsilon a_0ca_0}{n}\right)^n\in C.
\]
To conclude the proof we let $\varepsilon\searrow0$.
\end{proof}

%%%%%%%%%%%%%%%%%%%%%%%%%%%%%%%%%%%%%%%%%%%%%%%%%%%%%%%%%%%%%%%%%%%%%%%%%%%%%%%%
\section{Semigroups of $k$-(super)positive, $k$-entanglement breaking linear maps on $M_n$}
\label{sec:sup-1}

Now we apply Theorem \ref{thm-main} to the algebra $\mathrm{Lin}(M_n,M_n)^{\mathrm{her}}$ of hermitianity preserving linear maps from $M_n$ to $M_n$.

%%%%%%%%%%%%%%%%%%%%%%%%%%%%%%%%%%%%%%%
\subsection{Basic properties of several cones relevant to quantum information}
\label{subsec-cones}

First we verify some basic properties of the cones of $k$-(super)positive and $k$-entanglement breaking linear maps that will insure that we can apply Theorem \ref{thm-main}.

\begin{proposition}\label{cones}
The cones $\mathcal{P}_k,\mathcal{S}_k, \mathcal{EB}_k, \mathcal{EB}_k^{\circ}\subseteq \mathrm{Lin}(M_n,M_n)^{\mathrm{her}}$, $k=1,\ldots,n$, are closed, pointed and solid. Furthermore they are stable under composition.
\end{proposition}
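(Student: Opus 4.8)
The plan is to dispatch the three order-theoretic properties (closed, pointed, solid) using duality to avoid repeating work, and then to treat stability under composition cone by cone. For closedness, the cones $\mathcal{P}_k$ and $\mathcal{EB}_k$ are closed by the cited results, while $\mathcal{S}_k=\mathcal{P}_k^\circ$ and $\mathcal{EB}_k^\circ$ are dual cones and hence closed as intersections of closed half-spaces. For pointedness I would exploit the inclusion chains (i)--(iii): each of the four cones sits inside $\mathcal{PM}=\mathcal{P}_1$, so it suffices to show $\mathcal{PM}$ is pointed and that any subcone of a pointed cone is pointed. The latter is immediate, since $C\subseteq D$ gives $C\cap(-C)\subseteq D\cap(-D)$. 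For the former, if $S$ and $-S$ are both positive then $S(A)\ge 0$ and $-S(A)\ge 0$ for every $A\in M_n^+$, forcing $S(A)=0$; as $M_n^+$ spans $M_n$ this gives $S=0$.

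For solidity I would not argue directly but invoke the Proposition relating pointedness of a closed convex cone to solidity of its dual, together with the bipolar theorem $(C^\circ)^\circ=C$, which applies because all four cones are closed. Applying the equivalence ``$C$ pointed $\iff$ $C^\circ$ solid'' to $C=\mathcal{S}_k$ and to $C=\mathcal{P}_k$ shows that $\mathcal{P}_k=\mathcal{S}_k^\circ$ and $\mathcal{S}_k=\mathcal{P}_k^\circ$ are solid; applying it to $C=\mathcal{EB}_k^\circ$ and to $C=\mathcal{EB}_k$ shows that $\mathcal{EB}_k=(\mathcal{EB}_k^\circ)^\circ$ and $\mathcal{EB}_k^\circ$ are solid. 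Thus solidity of each cone is a free consequence of the pointedness of its dual, already established in the previous step.

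Turning to composition, the first three cones admit direct arguments. For $\mathcal{P}_k$ I would use $\mathrm{Id}_k\otimes(S\circ T)=(\mathrm{Id}_k\otimes S)\circ(\mathrm{Id}_k\otimes T)$ together with the fact that a composition of positive maps on $M_k\otimes M_n$ is positive. For $\mathcal{S}_k$ I would multiply Kraus representations: if $S(X)=\sum_i L_i^*XL_i$ and $T(X)=\sum_j K_j^*XK_j$ with all $L_i,K_j$ of rank $\le k$, then $(S\circ T)(X)=\sum_{i,j}(K_jL_i)^*X(K_jL_i)$ and $\mathrm{rank}(K_jL_i)\le k$. For $\mathcal{EB}_k$ I would write $(\mathrm{Id}_k\otimes(S\circ T))X=(\mathrm{Id}_k\otimes S)\big((\mathrm{Id}_k\otimes T)X\big)$; the inner term is separable since $T\in\mathcal{EB}_k$ and $X\ge 0$, and $\mathrm{Id}_k\otimes S$ preserves separability because $S$ is positive, sending each $P\otimes Q$ to $P\otimes S(Q)$ with $S(Q)\ge 0$ and, by continuity, respecting the closed convex hull. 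Along the way this shows that $\mathcal{EB}_k$ absorbs $\mathcal{CP}$ on both sides, a fact I will reuse.

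The step I expect to be genuinely hard is composition-stability of $\mathcal{EB}_k^\circ$, because it is presented only as a dual cone and its elements are merely positive rather than $k$-positive, so the arguments above collapse. Here I would pass to the Hilbert--Schmidt pairing and use $(S_1\circ S_2)^*=S_2^*\circ S_1^*$ to obtain $\langle S_1\circ S_2,T\rangle=\langle S_1,T\circ S_2^*\rangle=\langle S_2,S_1^*\circ T\rangle$. Taking $S_1$ completely positive immediately yields that $\mathcal{EB}_k^\circ$ absorbs $\mathcal{CP}$ on both sides, dually to the previous paragraph. For full stability $S_1,S_2\in\mathcal{EB}_k^\circ\Rightarrow S_1\circ S_2\in\mathcal{EB}_k^\circ$, the identity reduces the claim to the mixed absorption lemma $\mathcal{EB}_k\circ(\mathcal{EB}_k^\circ)^*\subseteq\mathcal{EB}_k$, equivalently $(\mathcal{EB}_k^\circ)^*\circ\mathcal{EB}_k\subseteq\mathcal{EB}_k$. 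Proving this lemma is the crux, and I would attack it via an explicit description of $\mathcal{EB}_k^\circ$ from \cite{cmw,dms}; as a consistency check I would first confirm it at the extremes $k=1$, where $\mathcal{EB}_1^\circ=\mathcal{S}_1$, and $k=n$, where $\mathcal{EB}_n^\circ=\mathcal{PM}$, both of which are already known to be stable under composition.
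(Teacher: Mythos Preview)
Your proof is correct, but the route differs from the paper's in two places worth noting.

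For \emph{solid and pointed}, the paper proceeds in the opposite order: it first shows that the depolarising channel $P$ lies in the interior of $\mathcal{S}_1=\mathcal{EB}$ (Lemma~\ref{lem-D}), hence in the interior of every cone in the list, so all are solid; pointedness then follows by duality. You instead prove pointedness of $\mathcal{PM}$ directly and propagate it downward, then obtain solidity by duality. Both work; your argument is self-contained and avoids the separate lemma, while the paper's identifies a concrete interior point, which is independently useful later.

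For \emph{composition-stability of $\mathcal{EB}_k^\circ$}, the paper does not go through a duality reduction at all: it invokes the explicit description from \cite[Equation (3.5), Theorem 3.11]{dms}, according to which every element of $\mathcal{EB}_k^\circ$ is a limit of convex combinations of $\Gamma\circ\Psi$ with $\Gamma:M_k\to M_n$ positive and $\Psi:M_n\to M_k$ completely positive, and observes that composing two such building blocks again gives a map of this form. Your adjoint identity $\langle S_1\circ S_2,T\rangle=\langle S_2,S_1^*\circ T\rangle$ and the resulting reduction to $(\mathcal{EB}_k^\circ)^*\circ\mathcal{EB}_k\subseteq\mathcal{EB}_k$ are valid, but you are making the last step harder than necessary: since $\mathcal{EB}_k^\circ\subseteq\mathcal{PM}$ and $\mathcal{PM}$ is $*$-stable, it suffices to know $\mathcal{PM}\circ\mathcal{EB}_k\subseteq\mathcal{EB}_k$, and this is exactly the left-absorption you already established in your $\mathcal{EB}_k$ paragraph (your argument there only used that $S$ is positive, not that $S\in\mathcal{EB}_k$). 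So you do not need the explicit description from \cite{cmw,dms} after all; your own earlier observation closes the loop.
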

\begin{proof}
Stability under composition is easy to check for $k$-positive maps and therefore for CP maps.
For $k$-superpositive maps, if $S,T\in \mathcal{S}_k$ have Kraus representations
\[
S(X)=\sum_{i=1}^p L_i^*XL_i \quad\text{ and }\quad T(X)=\sum_{j=1}^q K_j^* X K_j,
\]
with Kraus operators of rank less than or equal to $k$, then so does their composition $S\circ T$,
\[
S\circ T(X) = \sum_{i=1}^p \sum_{j=1}^q (K_jL_i)^* X K_j L_i,
\]
since $\mathrm{rank}(K_jL_i)\le \min\{{\mathrm{rank}(L_i),\mathrm{rank}(K_j)}\}\le k$.
For the stability of composition of $k$-entanglement breaking maps, see \cite[Theorem 5.4]{dms}. For the dual cone $\mathcal{EB}_k^\circ$, it follows from the characterisation given in \cite[Equation (3.5), Theorem 3.11]{dms}. Indeed, suppose that $S_i$ are limits of convex combinations of the form given in \cite{dms}, i.e., $S_i=\lim \sum \lambda_p^{(i)} \Gamma_{p}^{(i)}\circ \Psi_p^{(i)}$, with $\Gamma^{(i)}_p:M_k\to M_n$ positive and $\Psi_p^{(i)}:M_n\to M_k$ completely positive for $i=1,2$ (where we suppressed the index for the limit). Then $\Gamma_p^{(1)}\circ \Psi_p^{(1)}\circ \Gamma_p^{(2)}:M_k\to M_n$ is again positive, and so
\[
S_1\circ S_2 = \lim \sum_{p,p} \lambda^{(1)}_p \lambda^{(2)}_{p'} \left( \Gamma_p^{(1)}\circ \Psi_p^{(1)}\circ \Gamma_{p'}^{(2)}\right) \circ \Psi^{(2)}_{p'}
\]
is again of the form given in \cite[Equation (3.5), Theorem 3.11]{dms}.

To show that all these cones are solid (i.e., $C+(-C)=\mathrm{Lin}(M_n,M_n)$, or, equivalently, they have non-empty interior) and pointed (i.e., $C\cap(-C)=\{0\}$), we will show below in Lemma \ref{lem-D} that the depolarising channel $P:M_n\to M_n$, $P(X)=\mathrm{Tr}(X)\frac{1}{n}I_n$ is in the interior of $\mathcal{S}_1=\mathcal{EB}$. Since all other cones contain $\mathcal{EB}$, they are also solid. Which implies that they are pointed, as duals of solid cones.
\end{proof}

\begin{lemma}\label{lem-D}
The depolarising channel $P:M_n\to M_n$, $P(X)=\mathrm{Tr}(X)\frac{1}{n}I_n$ belongs to the interior of the cone $\mathcal{S}_1=\mathcal{EB}$ of entanglement breahing maps.
\end{lemma}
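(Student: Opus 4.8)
The plan is to transport the statement to the Choi picture, where the cone $\mathcal{EB}=\mathcal{S}_1$ becomes the cone of separable matrices and $P$ becomes a multiple of the identity, and then to prove directly that the identity is an interior point of the separable cone. First I would introduce the Choi--Jamio\l{}kowski map $J:\mathrm{Lin}(M_n,M_n)\to M_n\otimes M_n$, $J(S)=\sum_{i,j}E_{ij}\otimes S(E_{ij})$, which is a linear bijection restricting to a bijection between $\mathrm{Lin}(M_n,M_n)^{\mathrm{her}}$ and $(M_n\otimes M_n)^{\mathrm{sa}}$ (both of real dimension $n^4$). Recalling that $S\in\mathcal{S}_1$ has a Kraus representation with rank-one operators, writing $L_i=|b_i\rangle\langle a_i|$ gives $S(X)=\sum_i\langle a_i|X|a_i\rangle\,|b_i\rangle\langle b_i|$; applying $J$ produces a sum of tensor products of positive rank-one matrices, i.e.\ a separable matrix. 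Hence $J$ maps $\mathcal{EB}=\mathcal{S}_1$ onto the separable cone $\mathrm{SEP}=\mathrm{cone}\{A\otimes B:A,B\in M_n^+\}$, and a direct computation gives $J(P)=\tfrac1n I_n\otimes I_n$. Since a linear bijection between finite-dimensional spaces is a homeomorphism, it carries interior points to interior points, so it suffices to prove that $I_n\otimes I_n$ lies in the interior of $\mathrm{SEP}$ inside $(M_n\otimes M_n)^{\mathrm{sa}}$.

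The heart of the argument is an elementary separable-ball estimate. I would fix an orthonormal Hermitian basis $\{\sigma_\alpha\}_{\alpha=1}^{n^2}$ of $M_n$ for the Hilbert--Schmidt inner product; since $\|\sigma_\alpha\|_{\mathrm{op}}\le\|\sigma_\alpha\|_2=1$, for $t=1/n$ both $\tfrac1nI_n\pm t\sigma_\alpha$ are positive. The key identity
\[
\Big(\tfrac1nI_n+t\sigma_\alpha\Big)\otimes\Big(\tfrac1nI_n+t\sigma_\beta\Big)+\Big(\tfrac1nI_n-t\sigma_\alpha\Big)\otimes\Big(\tfrac1nI_n-t\sigma_\beta\Big)=\tfrac{2}{n^2}I_n\otimes I_n+2t^2\,\sigma_\alpha\otimes\sigma_\beta
\]
exhibits the left-hand side as a sum of two product-positives, hence (after rescaling by $n^2/2$ and letting $t$ run through $[0,1/n]$, and flipping the sign of $\sigma_\beta$) shows $I_n\otimes I_n+s\,\sigma_\alpha\otimes\sigma_\beta\in\mathrm{SEP}$ for every $|s|\le1$; equivalently $\mu\,I_n\otimes I_n+\nu\,\sigma_\alpha\otimes\sigma_\beta\in\mathrm{SEP}$ whenever $\mu\ge|\nu|$.

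Given any $H\in(M_n\otimes M_n)^{\mathrm{sa}}$ with $\|H\|_2\le1$, I would expand $H=\sum_{\alpha,\beta}c_{\alpha\beta}\,\sigma_\alpha\otimes\sigma_\beta$ with $\sum_{\alpha,\beta}c_{\alpha\beta}^2\le1$, so $|c_{\alpha\beta}|\le1$, and write
\[
I_n\otimes I_n+\varepsilon H=\sum_{\alpha,\beta}\Big(\tfrac1{n^4}I_n\otimes I_n+\varepsilon c_{\alpha\beta}\,\sigma_\alpha\otimes\sigma_\beta\Big).
\]
For $\varepsilon\le n^{-4}$ each summand is separable by the previous paragraph, hence so is the whole sum. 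Thus a Hilbert--Schmidt ball of radius $n^{-4}$ around $I_n\otimes I_n$ is contained in $\mathrm{SEP}$, giving interiority and, through $J$, the lemma.

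The step I expect to require the most care is not the inequality itself but the surrounding bookkeeping: verifying that $J$ genuinely maps $\mathcal{EB}$ \emph{onto} $\mathrm{SEP}$ (the equivalence of $1$-superpositivity with separability of the Choi matrix, i.e.\ $\mathcal{S}_1=\mathcal{EB}$), and confirming that ``interior'' is taken in the correct real space $\mathrm{Lin}(M_n,M_n)^{\mathrm{her}}\cong(M_n\otimes M_n)^{\mathrm{sa}}$ so that the dimension count matches and the homeomorphism argument applies. The constant $n^{-4}$ is far from optimal---the sharp radius is the Gurvits--Barnum bound---but any positive radius suffices here.
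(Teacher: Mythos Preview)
Your argument is correct, but it takes a different route from the paper. The paper exploits the duality $\mathcal{EB}^\circ=\mathcal{PM}$ and the standard fact that a point lies in the interior of a closed pointed cone iff it pairs strictly positively with every nonzero element of the dual cone: it simply shows that $\langle T,P\rangle=0$ for a positive map $T$ forces $\mathrm{Tr}\,T(|u\rangle\langle u|)=0$ for every $u$, hence $T=0$. This is a three-line computation using the expansion $\langle T,P\rangle=\tfrac1n\sum_i\mathrm{Tr}\,T(|u_i\rangle\langle u_i|)$ for any orthonormal basis $(u_i)$.

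Your approach instead moves the problem through the Choi isomorphism to the separable cone and then builds an explicit Gurvits--Barnum-type separable ball around $I_n\otimes I_n$. The bookkeeping you flag (that $J$ restricts to a bijection $\mathcal{S}_1\to\mathrm{SEP}$ between the right real subspaces) is routine and your sketch handles it. The trade-off is clear: the paper's duality argument is shorter and uses no estimate at all, whereas your argument is constructive and yields a quantitative radius (here $n^{-4}$), which the duality proof does not. Both are self-contained within the paper's framework; yours imports slightly more machinery (the Choi map and the tensor-basis identity) in exchange for the explicit bound.
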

\begin{proof}
We will use the duality between $\mathcal{EB}=\mathcal{S}_1$ and $\mathcal{PM}=\mathcal{P}_1$. To show that $P$ belongs to the interior of $\mathcal{EB}$, it is sufficient to show that
\[
\forall T\in \mathcal{PM}=\mathcal{EB}^\circ, T\not=0 \quad\Rightarrow\quad \langle T,P\rangle >0.
\]
Let $T\in \mathcal{PM}$ such that $\langle T,P\rangle =0$. 
Choose an orthonormal basis $(u_i)_{i=1,\ldots,n}$ of $\mathbb{C}^n$. 
Then the set of rank $1$ operators $\big(|u_i\rangle\langle u_j|\big)_{i,j=1,\ldots,n}$ is an orthonormal basis of $M_n$ and we have
\[
0 = \langle T,P\rangle = \sum_{i,j=1}^n \Big\langle T\big(|u_i\rangle\langle u_j|\big), \underbrace{P\big(|u_i\rangle\langle u_j|\big)}_{=\frac{\delta_{ij}}{n}I_n}\Big\rangle 
= \frac{1}{n} \sum_{i=1}^n \mathrm{Tr}\Big(T\big(|u_i\rangle\langle u_i|\big)^* \Big)
\]
Since $T$ is a positive map, each term in this sum is positive, and therefore has to vanish. Since the orthonormal basis $(u_i)_{i=1,\ldots,n}$ is arbitrary, this implies
\[
\forall u\in \mathbb{C}^n,\qquad \mathrm{Tr}\Big(T\big(|u\rangle\langle u|\big)\Big) =0.
\]
and therefore $T=0$.
\end{proof}

%%%%%%%%%%%%%%%%%%%%%%%%%%%%%%%%%%%%%%%
\subsection{Application of the Schoenberg correspondence}

We now apply the Schoenberg correspondence to maps on $M_n$ and the cones introduced earlier.

\begin{theorem}\label{cone k-pos}
Let
\[
A=\mathrm{Lin}(M_n,M_n)^{\mathrm{her}} = \big\{ T\in \mathrm{Lin}(M_n,M_n); T\circ * = *\circ T\big\}
\]
and let $C\subseteq A$ be one of the cones $\mathcal{PM}=\mathcal{P}_1$, $\mathcal{P}_2,\ldots,\mathcal{P}_{n-1}$, $\mathcal{P}_n=\mathcal{CP}=\mathcal{S}_n$, $\mathcal{S}_{n-1},\ldots,\mathcal{S}_2$, $\mathcal{S}_1=\mathcal{EB}=\mathcal{EB}_n$, $\mathcal{EB}_{n-1},\ldots \mathcal{EB}_1$, $\mathcal{EB}_1^\circ,\ldots,\mathcal{EB}_n^\circ=\mathcal{S}_1^\circ=\mathcal{PM}$.

Fix an idempotent map $T_0\in C$. Then for $S\in A$ with $S\circ T_0=T_0\circ S=S$ the following are equivalent.
\begin{itemize}
\item[(i)]
We have $\exp_{T_0}(tS)=T_0+\sum_{n=1}^\infty \frac{t^n S^{\circ n}}{n!}\in C$ for all $t\ge 0$;
\item[(ii)]
$S$ is $T_0$-conditionally positive on $C^\circ$, i.e., we have
\[
\forall v\in C^\circ,\quad \langle v, T_0\rangle =0 \quad\Rightarrow\quad \langle v, S\rangle \ge 0.
\]
\end{itemize}
\end{theorem}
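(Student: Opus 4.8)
The plan is to deduce this directly from the general Schoenberg correspondence in Theorem \ref{thm-main}, specialised to the real Banach algebra $A=\mathrm{Lin}(M_n,M_n)^{\mathrm{her}}$ with multiplication given by composition $\circ$, with the cone $C$ one of those listed, the idempotent $a_0=T_0$, and the element $b=S$. The entire proof then consists of verifying that the hypotheses of Theorem \ref{thm-main} hold in this setting; once they do, the asserted equivalence of (i) and (ii) is immediate, up to matching the two descriptions of $\exp_{T_0}(tS)$.

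First I would record that $A$ is indeed a real Banach algebra under composition. The hermitianity-preserving condition $T\circ *=*\circ T$ is stable under composition, so $A$ is closed under the product, and $A$ is a finite-dimensional real vector space — it is not a complex algebra, since multiplying a hermitianity-preserving map by $i$ destroys the property, which is precisely why the ground field in Theorem \ref{thm-main} is $\mathbb{R}$. Next, Proposition \ref{cones} supplies everything needed about $C$: each of $\mathcal{P}_k,\mathcal{S}_k,\mathcal{EB}_k,\mathcal{EB}_k^\circ$ is a closed, pointed, solid convex cone, hence a closed convex cone with non-empty interior, and each is stable under composition.

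The one hypothesis that genuinely uses the cone structure is the requirement that $a_0ca_0\in C$ for every $c\in C$, i.e.\ $T_0\circ c\circ T_0\in C$. This is where I expect whatever content there is to sit, but it follows painlessly from stability under composition together with $T_0\in C$: composing the three elements $T_0,c,T_0$ of $C$ stays in $C$. The companion hypothesis $c^{\circ n}\in C$ for all $c\in C$ and $n\ge 1$ is the same composition-stability applied to repeated self-composition. Finally, the condition $S\circ T_0=T_0\circ S=S$ is exactly the hypothesis $ba_0=a_0b=b$ imposed on the admissible generators in Theorem \ref{thm-main}.

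With all hypotheses verified, Theorem \ref{thm-main} yields the equivalence between the $T_0$-conditional positivity of $S$ on $C^\circ$ (statement (ii) here) and $\exp_{T_0}(tS)\in C$ for all $t\ge 0$ (statement (i) here); the two conditions merely appear in the opposite order. To close, I would identify the expansions: Theorem \ref{thm-main} gives $\exp_{T_0}(tS)=\lim_{n\to\infty}\big(T_0+\tfrac{tS}{n}\big)^{\circ n}=T_0+\sum_{m=1}^\infty \frac{t^m S^{\circ m}}{m!}$, which is precisely the series written in (i) with the algebra powers read as composition powers. I do not anticipate any real obstacle: all the difficulty was absorbed into the proofs of Theorem \ref{thm-main} and Proposition \ref{cones}, and the present statement is simply their specialisation.
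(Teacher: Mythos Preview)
Your proposal is correct and follows essentially the same approach as the paper: verify that $A=\mathrm{Lin}(M_n,M_n)^{\mathrm{her}}$ is a real Banach algebra, invoke Proposition~\ref{cones} to obtain that $C$ is closed, solid, and stable under composition (which in particular gives $T_0\circ c\circ T_0\in C$ and $c^{\circ n}\in C$), and then apply Theorem~\ref{thm-main}. The paper's proof is slightly terser but structurally identical.
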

\begin{proof}
The hermitianity preserving maps on $M_n$ form a real Banach algebra, when we equip it with the norm induced by the operator norm on $M_n\cong \mathrm{Lin}(\mathbb{C}^n,\mathbb{C}^n)$.

Proposition \ref{cones} ensures that all the cones are convex, solid, and pointed, so they have in particular a non-empty interior. Furthermore, they are closed under composition, so we have $S^{\circ n}\in C$ and $T_0\circ S\circ T_0\in C$ for any $S,T_0\in C$ and $n\ge 1$.

Therefore we can apply Theorem \ref{thm-main} to any pair $(T_0,S)$, with $T_0$ an idempotent in $C$ and $S\in A$ such that $S\circ T_0=T_0\circ S=S$, and the result follows.
\end{proof}

\begin{corollary}\label{ex-cone}
Take $C=\mathcal{P}_k$ with $k\in\{1,\ldots,n\}$ and $T_0=\mathrm{Id} = T_{I\otimes I}$.

A semigroup $T_t=\exp(tS)$ with generator $S=T_W$ where $W=\sum A_i\otimes B_i\in (M_n\otimes M_n^{op})^{\mathrm{sa}}$, consists of $k$-positive maps for all $t\ge 0$ if and only if
\[
\forall V\in M_n, \quad \big(\mathrm{rank}(V)\le k \text{ and }\mathrm{Tr}(V)=0\big) \quad \Rightarrow \quad \sum \mathrm{Tr}(A_i V^*)\mathrm{Tr}(B_i V) \ge 0.
\]
\end{corollary}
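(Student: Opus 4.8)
The plan is to obtain Corollary \ref{ex-cone} as a direct specialization of Theorem \ref{cone k-pos} to the cone $C=\mathcal{P}_k$ with idempotent $T_0=\mathrm{Id}$, translating its abstract condition (ii) into the stated concrete inequality. First I would observe that $\mathrm{Id}=T_{I\otimes I}$ is completely positive, hence lies in $\mathcal{P}_k$, is idempotent, and satisfies $S\circ\mathrm{Id}=\mathrm{Id}\circ S=S$ trivially; moreover $\exp_{\mathrm{Id}}(tS)$ is the ordinary exponential $\exp(tS)=T_t$. Thus condition (i) of the corollary is precisely condition (i) of Theorem \ref{cone k-pos}, and it remains to rewrite condition (ii). Since $\mathcal{P}_k^\circ=\mathcal{S}_k$, the dual cone occurring there is the cone of $k$-superpositive maps, and the pairing $\langle\cdot,\cdot\rangle$ is the Hilbert-Schmidt inner product on $\mathrm{Lin}(M_n,M_n)$.

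Next I would compute the relevant inner products through the isomorphism $T$ of Proposition \ref{isomorphism}. Because $T$ is an isomorphism of Hilbert spaces, $\langle T_{A,B},T_{A',B'}\rangle=\mathrm{Tr}(A^*A')\,\mathrm{Tr}(B^*B')$. Every element of $\mathcal{S}_k$ is a sum of maps $v=T_{V,V^*}$, that is $v(X)=VXV^*$, with $\mathrm{rank}(V)\le k$, taking $V$ to be the adjoint of a Kraus operator. For such a generator I would compute $\langle v,\mathrm{Id}\rangle=\langle T_{V,V^*},T_{I,I}\rangle=\mathrm{Tr}(V^*)\mathrm{Tr}(V)=|\mathrm{Tr}(V)|^2$, and, with $S=T_W$ and $W=\sum_i A_i\otimes B_i$, $\langle v,S\rangle=\sum_i\mathrm{Tr}(V^*A_i)\mathrm{Tr}(VB_i)=\sum_i\mathrm{Tr}(A_iV^*)\mathrm{Tr}(B_iV)$. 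In particular $\langle v,\mathrm{Id}\rangle=0$ if and only if $\mathrm{Tr}(V)=0$.

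Finally I would reduce condition (ii) to these generators. For the implication ``stated condition $\Rightarrow$ (ii)'': given $v\in\mathcal{S}_k$ with $\langle v,\mathrm{Id}\rangle=0$, a Kraus decomposition yields $v=\sum_j T_{V_j,V_j^*}$ with $\sum_j|\mathrm{Tr}(V_j)|^2=\langle v,\mathrm{Id}\rangle=0$, so every $V_j$ has $\mathrm{Tr}(V_j)=0$ and $\mathrm{rank}(V_j)\le k$; applying the stated inequality termwise and summing gives $\langle v,S\rangle\ge0$. For the converse, each $V$ with $\mathrm{rank}(V)\le k$ and $\mathrm{Tr}(V)=0$ produces a generator $T_{V,V^*}\in\mathcal{S}_k$ lying in the face $\{\langle\cdot,\mathrm{Id}\rangle=0\}$, so (ii) directly forces $\sum_i\mathrm{Tr}(A_iV^*)\mathrm{Tr}(B_iV)\ge0$, establishing the equivalence. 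The only point requiring care is this passage from the full cone $\mathcal{S}_k$ to its generators, which works precisely because $\langle\cdot,\mathrm{Id}\rangle$ is nonnegative on $\mathcal{S}_k$ (as $\mathrm{Id}\in\mathcal{CP}\subseteq\mathcal{P}_k=\mathcal{S}_k^\circ$) so that vanishing of $\langle v,\mathrm{Id}\rangle$ forces every summand into the face; the remaining computations are routine trace bookkeeping, and I do not expect a genuine obstacle.
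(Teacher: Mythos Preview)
Your proof is correct and follows essentially the same route as the paper: specialize Theorem \ref{cone k-pos} with $T_0=\mathrm{Id}$, use $\mathcal{P}_k^\circ=\mathcal{S}_k$ together with the fact that every element of $\mathcal{S}_k$ is a sum (equivalently, a convex combination) of maps $T_{V,V^*}$ with $\mathrm{rank}(V)\le k$, and then observe that $\langle\,\cdot\,,\mathrm{Id}\rangle$ is a sum of the nonnegative terms $|\mathrm{Tr}(V_j)|^2$, so vanishing on $v$ forces vanishing on each summand. The inner-product computations via Proposition \ref{isomorphism} match the paper's, and your explicit remark that $\mathrm{Id}\in\mathcal{S}_k^\circ$ is what makes the face argument work is exactly the point the paper uses implicitly.
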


\begin{proof}
Here we consider unital semigroups, i.e., $T_0=\mathrm{Id}$, which belongs to $\mathcal{P}_k$ for all $k\ge 1$. Furthermore, the condition $S\circ T_0=T_0\circ S=S$ now holds for any $S\in \mathrm{Lin}(M_n,M_n)^{\mathrm{her}}$.

We know that
\[
C^\circ = \mathcal{P}_k^\circ = \mathcal{S}_k = \text{convex hull of }\{ T_{V\otimes V^*}; V\in M_n, \mathrm{rank}(V)\le k\},
\]
cf.\ \cite{ssz}.

Note that
\[
\langle T_{V\otimes V^*},\mathrm{Id}\rangle = \mathrm{Tr}(V^*\otimes V) = \left|\mathrm{Tr}(V)\right|^2,
\]
since, by Proposition \ref{isomorphism}, $T$ is an isomorphism of Hilbert spaces.

By Theorem \ref{cone k-pos}, $\exp(tS)\in \mathcal{P}_k$ for all $t\ge 0$, iff $S$ is $\mathrm{Id}$-conditionally positive on $\mathcal{S}_k=\mathcal{P}_k^\circ$, i.e. , if
\[
\forall \varphi\in \mathcal{S}_k, \quad \langle\varphi,\mathrm{Id}\rangle= 0 \quad\Rightarrow\quad \langle \varphi, S\rangle \ge 0.
\]
Let us check that it is sufficient to verify this for $\varphi\in\{ T_{V\otimes V^*}; V\in M_n, \mathrm{rank}(V)\le k\}$. Indeed, this set generates $\mathcal{S}_k$. And, since for a convex combination $\varphi= \sum \lambda_i T_{V_i\otimes V_i^*}$ with $\lambda_i>0$, $\sum \lambda_i=1$, we have
\[
\left\langle \sum \lambda_i T_{V_i\otimes V_i^*}, \mathrm{Id}\right\rangle =  \sum \lambda_i \mathrm{Tr}(V_i^*\otimes V_i) = \sum \lambda_i \left|\mathrm{Tr}(V_i)\right|^2,
\]
we see that the condition $\langle \varphi,\mathrm{Id}\rangle=0$ is satisfied for a convex combination iff it is satisfied for each term.

If $S=T_W$ with $W=\sum A_i\otimes B_i\in (M_n\otimes M_n^{op})^{\mathrm{sa}}$, then
\[
\langle T_{V \otimes V^*}, T_W \rangle = \langle V\otimes V^*, W\rangle = \mathrm{Tr}\left(V^* A_i\otimes V B_i\right) = \mathrm{Tr}(A_i V^*)\mathrm{Tr}(B_i V),
\]
which completes the proof.
\end{proof}
Let us now consider the special case of unital CP semigroups.

Let $\{B_j\}_{j=1,\ldots, n^2}$ be an orthonormal basis of $M_n$ such that $B_1= I_n$ and $\mathrm{Tr}(B_j)=0$ for $2\le j \le n^2$.
Taking $\{B_i\otimes B_j\}_{p,q=1}^{n^2}$ as a basis of $M_n\otimes M_n^{op}$, or equivalently $\{B_i\otimes B_j^*\}_{i,j=1}^{n^2}$ of $M_n\otimes M_n^*$ further we write $S= \sum_{i,j=1}^{n^2} D_{ij}B_i\otimes B_j^*$, where $D_{ij}$ is a $n^2 \times n^2$ matrix in $\mathbb{C}$, so that the map $T_S$ is given by
\begin{equation}\label{generator}
\Psi (X)= T_S(X)= \sum_{i,j=1}^{n^2} D_{ij}B_i X B_j^*
\end{equation}

for all $X\in M_n$.
The semigroup $\exp{(t\Psi)}$ is Hermitianity preserving if and only if the generator $T_S$ is hermitianity perserving if and only if the matrix $(D_{ij})$ is hermitian. 

\begin{proposition}\label{CP semigroup}
A semigroup $T_t= \exp{t \Psi}$ is completely positive if and only if the matrix $(D_{ij})_{i,j=1}^{n^2}$ is hermitian and for any $v=(0,v_2,v_3,\cdots, v_{n^2})\in \mathbb{C}^{n^2}$ we have
\[
\langle v|(D_{ij})_{i,j=1}^{n^2}v\rangle \ge 0,
\]
\end{proposition}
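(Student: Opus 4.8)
The plan is to deduce everything from Corollary \ref{ex-cone} specialised to $k=n$, using the fact recalled in Section \ref{sec:prelim} that a linear map is completely positive if and only if it is $n$-positive. Thus $\mathcal{CP}=\mathcal{P}_n$, and since we consider the unital semigroup $\exp(t\Psi)$ we take $C=\mathcal{P}_n$ and $T_0=\mathrm{Id}$. Before invoking the corollary I would settle the hermiticity requirement: a completely positive map has a Kraus representation and hence is hermitianity preserving, so each $T_t$, and therefore the generator $\Psi=T_S$, preserves hermitianity; by the discussion preceding the proposition this is equivalent to $(D_{ij})$ being hermitian. This gives the hermiticity clause in the ``only if'' direction, while for the converse I would assume $(D_{ij})$ hermitian from the start, so that $S=T_S\in A=\mathrm{Lin}(M_n,M_n)^{\mathrm{her}}$ and $S\circ\mathrm{Id}=\mathrm{Id}\circ S=S$ holds automatically, making Corollary \ref{ex-cone} applicable.

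With $W=\sum_{i,j}D_{ij}\,B_i\otimes B_j^*$, Corollary \ref{ex-cone} asserts that $\exp(t\Psi)$ is $n$-positive for all $t\ge 0$ if and only if
\[
\sum_{i,j}D_{ij}\,\mathrm{Tr}(B_i V^*)\,\mathrm{Tr}(B_j^* V)\ge 0
\]
for every $V\in M_n$ with $\mathrm{rank}(V)\le n$ and $\mathrm{Tr}(V)=0$. Since $k=n$, the rank constraint is vacuous, so only the trace constraint survives; this is the reason the case $k=n$ collapses to a clean statement.

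The core computation is the change of coordinates $V=\sum_i v_iB_i$ with $v_i=\langle B_i,V\rangle=\mathrm{Tr}(B_i^*V)$. I would record the two identities $\mathrm{Tr}(B_j^*V)=v_j$ and $\mathrm{Tr}(B_iV^*)=\overline{\mathrm{Tr}(B_i^*V)}=\overline{v_i}$, the latter from the cyclicity and conjugate-symmetry of the trace, so that the quadratic expression above becomes
\[
\sum_{i,j}\overline{v_i}\,D_{ij}\,v_j=\langle v\,|\,(D_{ij})\,v\rangle .
\]
Because $B_1=I_n$ and $\mathrm{Tr}(B_j)=0$ for $j\ge 2$, the trace condition reads $\mathrm{Tr}(V)=n\,v_1=0$, i.e.\ $v_1=0$. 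As $V\mapsto(v_1,\dots,v_{n^2})$ is a linear bijection of $M_n$ onto $\mathbb{C}^{n^2}$, letting $V$ range over all traceless matrices is the same as letting $v$ range over all vectors of the form $(0,v_2,\dots,v_{n^2})$. Substituting these identities into Corollary \ref{ex-cone} gives precisely the positivity of $(D_{ij})$ on the subspace $\{v_1=0\}$, which together with the hermiticity clause is the assertion of the proposition.

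I expect no genuine obstacle, since the analytic content is already carried by Theorem \ref{cone k-pos} and Corollary \ref{ex-cone}; the step needing the most care is the bookkeeping of complex conjugates yielding $\mathrm{Tr}(B_iV^*)=\overline{v_i}$ and the verification that the form $\langle v\,|\,(D_{ij})\,v\rangle$ matches the sesquilinear convention used in the corollary, after which the result follows by direct substitution.
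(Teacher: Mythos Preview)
Your proposal is correct and follows essentially the same route as the paper's proof: both specialize Corollary \ref{ex-cone} to $k=n$, write $V=\sum_j v_j B_j$, observe that $\mathrm{Tr}(V)=0$ is equivalent to $v_1=0$ while the rank constraint is vacuous, and use the orthonormality of the $B_j$'s to reduce the positivity condition to $\sum_{i,j}D_{ij}\overline{v_i}v_j\ge 0$. Your treatment is in fact slightly more explicit than the paper's in handling the hermiticity clause and the conjugate bookkeeping $\mathrm{Tr}(B_iV^*)=\overline{v_i}$, but the argument is the same.
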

\begin{proof}
Let $V$ be a $n\times n$ matrix with the basis decomposition $V=\sum_{j=1}^{n^2}{v_j}B_j$.
The trace conditions on $B_j$'s implies that $\mathrm{Tr}(V)=0$ if and only if $v_1=0$.
By a direct application of Corollary \ref{ex-cone} we see that the map $T_S$ with $S=\sum_{i,j=1}^{n^2}B_i\otimes B_j^*$ generates a completely positive semigroup (i.e. $n$-positive) if and only if 
for any $V\in M_n$ with $\mathrm{Tr}(V)=0$, we have $\sum_{i,j=1}^{n^2}D_{ij}\mathrm{Tr}(B_iV^*)\mathrm{Tr}(B_j^*V)\ge 0$.
Writing the basis decomposition of $V$ in this condition, we obtain 
\[
(\forall (0,v_2,v_3,\cdots, v_{n^2})\in \mathbb{C}^{n^2}) \implies \sum_{i,j=2}^{n^2}\sum_{k,l=2}^{n^2}D_{ij}\Bar{v_k}v_l \mathrm{Tr}(B_iB_k^*)\mathrm{Tr}(B_j^*B_l)\ge 0.
\]
Because of the orthonormality of the basis $B_i$'s, $\mathrm{Tr}(B_iB_k^*)=\delta_{ik}$ and $\mathrm{Tr}(B_j^*B_l)=\delta_{jl}$.
Thus the above condition becomes 
\[
\sum_{i,j=2}^{n^2}\sum_{k,l=2}^{n^2}D_{ij}\Bar{v_k}v_l \delta_{ik}\delta_{jl}= \sum_{i,j=2}^{n^2}D_{ij} \Bar{v_i}v_j \ge 0,
\]
which is the desired result.
\end{proof}

From this result, we can re-derive Lindblad\cite{lind}, Gorini, Kossakowski, and Sudarshan's theorem\cite{gks} on the generator of a CP semigroup.
\begin{theorem}
An identity preserving semigroup $\exp{(t\Psi)})$ is completely positive for all time $t\ge 0$ if and only if the generator $\Psi$ has the form
\[
\Psi(X)= i[H,X]+ \sum_{j=1}^k \{V_jXV^*_j-\frac{1}{2}(V_jV_j^*X + XV_jV_j^*)\}
\]
for all $X\in M_n$, where $H$ is an hermitian matrix and $V_1,\ldots,V_k\in M_n$.
\end{theorem}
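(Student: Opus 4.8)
The plan is to start from the description of the generator in Proposition \ref{CP semigroup} and to impose, in addition, the unitality constraint $\Psi(I_n)=0$. Writing the generator as $\Psi(X)=\sum_{i,j=1}^{n^2} D_{ij} B_i X B_j^*$ in the basis $\{B_j\}$ with $B_1=\frac{1}{\sqrt n}I_n$ and $\mathrm{Tr}(B_j)=0$ for $j\ge 2$, Proposition \ref{CP semigroup} tells us that $\exp(t\Psi)$ is completely positive for all $t\ge0$ exactly when $(D_{ij})$ is hermitian and its compression $(D_{ij})_{i,j=2}^{n^2}$ to the trace-zero indices is positive semidefinite. Since completely positive maps are automatically hermitianity preserving, the task is purely algebraic: assuming these two conditions together with $\Psi(I_n)=0$, rewrite $\Psi$ in the stated form, and conversely check that any $\Psi$ of that form satisfies them.

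First I would isolate the terms carrying the index $1$. Since $B_1\propto I_n$, the four blocks of $(D_{ij})$ produce, respectively, a scalar multiple of $X$ (from $D_{11}$), a left multiplication $LX$ and a right multiplication $XL^*$ (from the first column and first row, which are conjugate to each other by hermiticity and so share a single operator $L=\frac{1}{\sqrt n}\sum_{j\ge2}\overline{D_{1j}}B_j$), and the purely dissipative piece coming from the trace-zero block. Thus
\[
\Psi(X)=\tfrac{D_{11}}{n}X+LX+XL^*+\sum_{i,j\ge2}D_{ij}B_iXB_j^*.
\]

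Next I would use the two structural facts. Since the block $(D_{ij})_{i,j\ge2}$ is positive semidefinite and hermitian, I diagonalise it, $D_{ij}=\sum_\ell \gamma_\ell \overline{U_{\ell i}}U_{\ell j}$ with $\gamma_\ell\ge0$, and set $V_\ell=\sqrt{\gamma_\ell}\sum_{i\ge2}\overline{U_{\ell i}}B_i$, so that the dissipative piece becomes $\sum_\ell V_\ell XV_\ell^*$. To extract the Hamiltonian I split $L$ into its hermitian and anti-hermitian parts, writing $L=\tfrac12(L+L^*)+iH$ with $H=\tfrac1{2i}(L-L^*)$ hermitian, which gives $LX+XL^*=i[H,X]+\tfrac12\{L+L^*,X\}$. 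The key input is now unitality: $\Psi(I_n)=0$ reads $\tfrac{D_{11}}{n}I_n+L+L^*+\sum_\ell V_\ell V_\ell^*=0$, so $L+L^*=-\tfrac{D_{11}}{n}I_n-\sum_\ell V_\ell V_\ell^*$. Substituting this into the anticommutator, the scalar terms $\tfrac{D_{11}}{n}X$ cancel and one is left with
\[
\Psi(X)=i[H,X]+\sum_\ell\Big(V_\ell XV_\ell^*-\tfrac12(V_\ell V_\ell^*X+XV_\ell V_\ell^*)\Big),
\]
which is the asserted form. The converse is obtained by reversing this computation: from a generator written in Lindblad form one reads off the coefficient matrix, finds that its trace-zero block is positive semidefinite (being built from the $V_\ell$) and that the matrix is hermitian, and verifies $\Psi(I_n)=0$ directly, so that Proposition \ref{CP semigroup} applies.

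I expect the main obstacle to be the bookkeeping of the first row and column of $(D_{ij})$ together with the normalisation of $B_1$, and, more conceptually, recognising that it is precisely the unitality constraint $\Psi(I_n)=0$ that forces the scalar drift $\tfrac{D_{11}}{n}X$ to cancel and converts the symmetric part of $L$ into the correct $-\tfrac12\{V_\ell V_\ell^*,X\}$ terms; without this constraint the generator would carry an extra symmetric drift. Care is also needed to check that the anti-hermitian part of $L$ genuinely yields a hermitian $H$, and hence a well-defined Hamiltonian commutator.
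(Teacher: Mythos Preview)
Your proposal is correct and follows essentially the same route as the paper: both start from the decomposition $\Psi(X)=\kappa X+WX+XW^*+\sum_{i,j\ge2}D_{ij}B_iXB_j^*$, use Proposition~\ref{CP semigroup} to obtain positivity of the lower block $(D_{ij})_{i,j\ge2}$ and hence a Kraus-type rewriting of that block, and then invoke $\Psi(I_n)=0$ to eliminate the scalar drift and produce the Lindblad form with $H$ the anti-hermitian part of $W$. The differences are purely cosmetic: you normalise $B_1=\tfrac{1}{\sqrt n}I_n$ (which is consistent with orthonormality, whereas the paper writes $B_1=I_n$), you diagonalise the positive block via a spectral decomposition while the paper writes it as a sum $\sum_r A_rA_r^*$, and you extract $H$ before applying unitality whereas the paper does it afterwards; the resulting $H$ and $V_\ell$ are the same.
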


\begin{proof}
We use the basis decomposition \eqref{generator} of the generator $\Psi$,
\[
\Psi(X)= D_{11}X + \sum_{i=2}^{n^2}D_{i1}B_iX + \sum_{j=2}^{n^2}D_{1j}XB_j^* + \sum_{i,j=2}^{n^2}D_{ij}B_iXB_j^*
\]
As $(D_{ij})$ is hermitian, if we denote $W:=\sum_{i=2}^{n^2}D_{i1}B_i$ and $\kappa:=D_{11}$ then $W^*= \sum_{j=2}^{n^2}D_{1j}B_j^*$ and $\kappa$ is a real number. 
From the previous Proposition \ref{CP semigroup} we know that $(D_{ij})_{i,j=2}^{n^2}$ is a positive matrix.
Therefore there exists $A_1,A_2,...,A_k\in M_{n^2-1}$ such that $(D_{ij})_{i,j=2}^{n^2}= A_1A_1^*+A_2A_2^*+...+A_kA_k^*$.
If we write $A_r=(a_r(p,q))_{1\le p,q\le n}$ then in terms of the coefficients of the matrix $A_r$ we have
\[
D_{ij}= \sum_{r=1}^k \sum_{p=2}^{n^2} a_r(i,p)\overline{a_r(j,p)}.
\]
Substituting these in the above expression of $\Psi$ and regrouping the terms we obtain
\begin{align*}
\Psi(X) &= \kappa X + WX + XW^* + \sum_{r=1}^k\sum_{p=2}^{n^2}\Big(\sum_{i=2}^{n^2}a_r(i,p)B_i\Big)X\Big(\sum_{j=2}^{n^2}a_r(j,p)B_j\Big)^* \\
& = \kappa X + WX + XW^* + \sum_{p=2}^{n^2}\sum_{r=1}^k V_{p,r}XV_{p,r}^*
\end{align*}
where $V_{p,r}:= \sum_{i=2}^{n^2}a_r(i,p)B_i$.
That the semigroup preserves the identity is equivalent to the generator mapping it to zero i.e. $\Psi(I_n)=0$. Plugging in this condition we get
\[
0= \kappa + W + W^* + \sum_{p}\sum_{r}V_{p,r}V_{p,r}^*.
\]
So we can set $W= iH - \frac{1}{2}\kappa -\frac{1}{2}\sum_{p}\sum_{r}V_{p,r}V_{p,r}^*$, where $H$ is a Hermitian matrix and substituting it in the above expression of $\Psi(X)$, we have

\begin{align*}
\Psi(X) &= \kappa X + \left(iH-\frac{1}{2}\kappa-\frac{1}{2}\sum_{p}\sum_{r}V_{p,r}V_{p,r}^*\right)X + X\left(-iH-\frac{1}{2}\kappa - \frac{1}{2}\sum_{p}\sum_{r}V_{p,r}V^*_{p,r}\right) \\ 
&+ \sum_{p}\sum_{r}V_{p,r}XV_{p,r}^* \\
&= i[H,X] + \sum_{p}\sum_{r}\Big\{ V_{p,r}XV_{p,r}^* -\frac{1}{2}\Big(V_{p,r}V_{p,r}^*X + XV_{p,r}V^*_{p,r}\Big) \Big\}.
\end{align*}
\end{proof}

%%%%%%%%%%%%%%%%%%%%%%%%%%%%%%%%%%%%%%%%%%%%%%%%%%%%%%%%%%%%%%%%%%%%%%%%%%%%%%%%
\section{Positive Semigroups on $M_2$} 
In \cite{car} positive semigroups on $M_2$ have been characterized in terms of their generators.  Here we give another characterization, following the discussion above. 

The Pauli matrices are unitary matrices 
\[
\begin{array}{cccc}
\sigma_0= \frac{1}{2}
\begin{bmatrix}
1 & 0 \\
0 & 1
\end{bmatrix}, & 
\sigma_1 =\frac{1}{2}
\begin{bmatrix}
0 & 1 \\
1 & 0
\end{bmatrix}, &
\sigma_2= \frac{1}{2}
\begin{bmatrix}
0 & -i \\
i & 0
\end{bmatrix}, &
\sigma_3= \frac{1}{2}
\begin{bmatrix}
1 & 0 \\
0 & -1
\end{bmatrix},   
\end{array}
\]
which form an orthogonal basis of $M_2$ with respect to Hilbert-Schmidt inner product.
Moreover, they satisfy the following relations 
\[
\begin{array}{ccc}
\sigma_p^2= \sigma_0^2, & \sigma_p \sigma_q=- \sigma_q \sigma_p, &  \sigma_p\sigma_q= i\sigma_r\sigma_0
\end{array}
\]
if $(p,q,r)\in \{(1,2,3),(2,3,1),(3,1,2)\}$.
Decomposing a matrix $V$ with respect to the Pauli basis i.e $V = \sum_{p=0}^3 v_p\sigma_p$, we observe that $V$ has the form 
\[
\frac{1}{2}
\begin{bmatrix}
v_0+v_3 & v_1-iv_2\\
v_1+iv_2 & v_0-v_3
\end{bmatrix}.
\]
It is easy to see that 
\begin{itemize}
\item[(i)]
$V$ has trace zero if anf only if $v_0=0$,
\item[(ii)]
it has rank one if and only if $V\not=0$ and $\mathrm{det}(V)=0$, i.e., $v_0^2=v_1^2+v_2^2 +v_3^2$.
\end{itemize}
For a semigroup $\exp{(t\Phi)}$ on $M_2$ we can write the generator again in terms of Pauli basis as in \eqref{generator},
\[
\Phi= T_W, \quad \text{where} \quad W=\sum_{p,q=0}^3 D_{p,q}\sigma_p\otimes \sigma_q.
\]

\begin{proposition}
A semigroup $\exp{(t\Psi)}$, $t\ge 0$, is positive if and only if the matrix $(D_{pq})_{p,q=0}^3$ is hermitian and for all $v= (v_1,v_2,v_3)\in \mathbb{C}^3$ with $v_1^2+v_2^2+v_3^2=0$ we have 
\begin{equation}
\langle v| (D_{pq})_{p,q=1}^3 v\rangle\geq 0
\end{equation}
where $(D_{pq})_{p,q=1}^3$ is the $3\times 3$ submatrix of the matrix $(D_{ij})_{i,j=0}^3$, defined above.
Moreover, it is identity preserving if and only if the following relations are satisfied,
\[
\sum_{p=0}^3D_{pp}=0, \quad \text{and} \quad (D_{p0}+D_{0p})+i(D_{qr}-D_{rq})=0 
\]
for $(p,q,r)\in \{(1,2,3),(2,3,1),(3,1,2)\}$.
\end{proposition}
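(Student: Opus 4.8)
I would prove the two assertions separately: the positivity criterion follows from Corollary~\ref{ex-cone}, and the identity-preserving criterion from an elementary computation of $\Phi(I)$ in the Pauli basis.

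For positivity, recall that a positive map is exactly a $1$-positive map, so I would apply Corollary~\ref{ex-cone} with $n=2$ and $k=1$ to the generator $\Phi=T_W$, $W=\sum_{p,q=0}^3 D_{pq}\,\sigma_p\otimes\sigma_q$. First note that any positive map is hermitianity preserving, so if $\exp(t\Phi)$ is positive for all $t$ then $\Phi$ is hermitianity preserving; via the involution $\dag$ and the self-adjointness of the $\sigma_p$ one computes $W^\dag=\sum_{p,q}\overline{D}_{qp}\,\sigma_p\otimes\sigma_q$, so $W^\dag=W$ is exactly the hermiticity of $(D_{pq})$, which also places $W$ in $(M_2\otimes M_2^{\mathrm{op}})^{\mathrm{sa}}$ as required to apply the corollary. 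Under this hypothesis the corollary states that $\exp(t\Phi)$ is positive for all $t\ge0$ if and only if $\sum_{p,q}D_{pq}\,\mathrm{Tr}(\sigma_p V^*)\,\mathrm{Tr}(\sigma_q V)\ge0$ for every $V\in M_2$ with $\mathrm{rank}(V)\le1$ and $\mathrm{Tr}(V)=0$.

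I would then make this explicit in Pauli coordinates. Writing $V=\sum_l v_l\sigma_l$ and using the orthogonality relation $\mathrm{Tr}(\sigma_p\sigma_q)=\tfrac12\delta_{pq}$, one gets $\mathrm{Tr}(\sigma_q V)=\tfrac12 v_q$ and $\mathrm{Tr}(\sigma_p V^*)=\tfrac12\overline{v}_p$, so the form becomes $\tfrac14\sum_{p,q}D_{pq}\overline{v}_p v_q=\tfrac14\langle v|(D_{pq})\,v\rangle$. By facts (i) and (ii) recorded just before the statement, the constraints $\mathrm{Tr}(V)=0$ and $\mathrm{rank}(V)\le1$ become $v_0=0$ and $v_1^2+v_2^2+v_3^2=0$; the condition $v_0=0$ collapses the form onto the $3\times3$ submatrix, leaving exactly $\langle v|(D_{pq})_{p,q=1}^3\,v\rangle\ge0$ for all $(v_1,v_2,v_3)\in\mathbb{C}^3$ on the isotropic cone $v_1^2+v_2^2+v_3^2=0$ (the rank-zero case $v=0$ being trivial). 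This is the claimed positivity criterion.

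For the identity-preserving part, I would first observe that $\exp(t\Phi)$ fixes $I$ for all $t\ge0$ if and only if $\Phi(I)=0$ (one direction by differentiating at $t=0$, the other since $\Phi(I)=0$ forces $\Phi^{\circ k}(I)=0$ for all $k\ge1$). Now $\Phi(I)=T_W(I)=\sum_{p,q}D_{pq}\,\sigma_p\sigma_q$, which I would expand in the Pauli basis using the products above, namely $\sigma_p^2=\sigma_0^2=\tfrac14 I=\tfrac12\sigma_0$, $\sigma_0\sigma_q=\tfrac12\sigma_q$, and $\sigma_p\sigma_q=i\sigma_r\sigma_0=\tfrac{i}{2}\sigma_r$ for $(p,q,r)$ cyclic. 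The coefficient of $\sigma_0$ is $\tfrac12\sum_{p=0}^3 D_{pp}$, whose vanishing gives $\sum_{p=0}^3 D_{pp}=0$; the coefficient of $\sigma_p$ for each $p\in\{1,2,3\}$ is $\tfrac12(D_{p0}+D_{0p})+\tfrac{i}{2}(D_{qr}-D_{rq})$ for the cyclic triple $(p,q,r)$, whose vanishing gives the second family of relations. Conceptually nothing here is deep once Corollary~\ref{ex-cone} is in hand, and the work is entirely bookkeeping; the two delicate points are the rank/determinant translation producing the isotropic-cone constraint in the positivity part, and, in the identity-preserving part, correctly tracking the factors of $\tfrac12$ from the chosen normalization of the Pauli matrices together with the signs of the structure constants $\epsilon_{pqr}$, since a sign slip there would corrupt the antisymmetric combination $i(D_{qr}-D_{rq})$.
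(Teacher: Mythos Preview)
Your proposal is correct and follows essentially the same route as the paper: apply Corollary~\ref{ex-cone} with $k=1$, translate the rank/trace constraints on $V$ into $v_0=0$ and $v_1^2+v_2^2+v_3^2=0$ via the Pauli expansion, and reduce the identity-preserving condition to $\Phi(I)=0$. Your write-up is in fact more explicit than the paper's on two points---the equivalence of hermiticity of $(D_{pq})$ with $W^\dag=W$, and the actual extraction of the relations from $\sum_{p,q}D_{pq}\sigma_p\sigma_q=0$---both of which the paper leaves to the reader.
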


\begin{proof}
As a direct application of Corollary \ref{ex-cone}, it follows that the semigroup $\exp{(t\Psi)}$ is positive iff for any $V\in M_2$ with $\mathrm{rank}(V)=1$ and $\mathrm{Tr}(V)=0$
\begin{equation}\label{positive_criterion}
\sum_{p,q=0}^3D_{pq}\mathrm{Tr}(\sigma_pV^*)\mathrm{Tr}(\sigma_qV)\geq 0
\end{equation}
As we observed, expanding $V$ in the Pauli basis $V=\sum_{j=0}^3v_j\sigma_j$ the rank and trace conditions translate into $v_0=0$ and $\sum_{j=1}^3v_j^2=0$.   
Plugging this decomposition of $V$ into the expression \eqref{positive_criterion}, it becomes
\begin{align*}
\sum_{p,q=0}^3 \sum_{j,k=0}^3 D_{pq} \overline{v}_j v_k\mathrm{Tr}(\sigma_p\sigma_j)\mathrm{Tr}(\sigma_q\sigma_k)
&= \sum_{p,q=0}^3\sum_{j,k=0}^3 \overline{v}_j v_k\delta_{pj}\delta_{qk}\\
&= \frac{1}{4}\sum_{p,q=1}^3D_{pq}\overline{v}_p v_q \geq 0,
\end{align*}
which gives the desired inequality.

To obtain the conditions of hermitianity and identity preserving property, it is easy to see that
\begin{itemize}
\item [(i)] $\exp{(t\Psi)}$ is hermitianity preserving if and only if the generator $\Psi=T_W$ has the same property, which is equivalent to the matrix $(D_{pq})_{p,q=0}^3$ being hermitian.
\item[(ii)] The semigroup preserves identity if and only if the generator $\Psi$ takes identity to zero i.e. $T_S(\sigma_0)=0$.
\end{itemize}
\end{proof}

%%%%%%%%%%%%%%%%%%%%%%%%%%%%%%%%%%%%%%%%%%%%%%%%%%%%%%%%%%%%%%%%%%%%%%%%%%%%%%%%
\section{A $4$-parameter family of semigroups}

%%%%%%%%%%%%%%%%%%%%%%%%%%%%%%%%%%%%%%%
\subsection{The examples}
We now study the linear combinations of some well known maps, namely the depolarizing channel, the transpose, the conditional expectation onto diagonal matrices, and the identity map, and discuss the criteria for positivity, k-positivity or complete positivity of such combinations.
Next we will use these examples to generate identity preserving semigroups, which are of our ineterest.
We have already defined the \textit{depolarising channel} in Lemma \ref{lem-D}, as the linear map $P:M_n\to M_n$ satisfying
\[
P(X) = \frac{1}{n}\mathrm{Tr}(X) I_n.
\]
We have
\[
P(X) = \frac{1}{n} \sum_{j,k=1}^n \langle e_j, X e_j\rangle |e_k\rangle\langle e_k| = \sum_{j,k=1}^n \frac{1}{\sqrt{n}} |e_k\rangle\langle e_j| X \left(\frac{1}{\sqrt{n}}|e_k\rangle\langle e_j|\right)^*,
\]
which shows that $P\in \mathcal{S}_1=\mathcal{EB}$.
The Choi-Jamio{\l}kowski matrix of $P$ is
\[
C_P = \sum_{j,k=1}^n E_{jk}\otimes P(E_{jk}) = \frac{1}{n} \sum_{j,k=1}^n E_{jj}\otimes E_{kk} = \frac{1}{n} I_n\otimes I_n.
\]

We consider also the \textit{transposition map}, $T(X) = X^T$. It is known that $T$ is positive, but not $2$-positive, i.e.
\[
T\in\mathcal{P}_1, \qquad T\not\in\mathcal{P}_2,
\]
for $n\ge 2$.
The Choi-Jamio{\l}kowski matrix of $T$ is
\[
C_T = \sum_{j,k=1}^n E_{jk}\otimes T(E_{jk}) =  \sum_{j,k=1}^n E_{jk}\otimes E_{kj}.
\]
\textit{Conditional expectation onto the the diagonal:}
Consider the linear map $D:M_n\to M_n$, $D(X) = (\delta_{jk} x_{jk})_{1\le j,k\le n}$ for $X=(x_{jk})_{1\le j,k\le n}\in M_n$. This map is the conditional expectation onto the *-subalgebra of diagonal matrices (w.r.t.\ the standard basis).

We have
\[
D(X) = \sum_{j=1}^n |e_j\rangle\langle e_j| X |e_j\rangle \langle e_j|,
\]
which shows that $D\in \mathcal{S}_1=\mathcal{EB}$.

We can furthermore show that $D$ belongs to the boundary of $\mathcal{CP}$, and therefore also to the boundary of $\mathcal{EB}$. Indeed, denote by $C\in M_n$ that matrix that cyclically permutes the vectors of the standard basis,
\[
C e_j = e_{j\oplus 1} = \left\{
\begin{array}{cc}
e_{j+1} & \text{ if } 1\le j\le n-1, \\
e_1 & \text{ if } j=n,
\end{array}\right.
\]
where we use $\oplus$ to denote the addition modulo $n$ in $\{1,\ldots,n\}$. 
Then the completely positive map $T_C$ with $T_C(X)= CXC^*$ acts as
\[
T_C(E_{jk}) = E_{j\oplus1 ,k\oplus 1}.
\]
Thus we have 
\[
\langle T_C,D\rangle = \sum_{j,k=1}^n \mathrm{Tr}\big( T_C(E_{jk})^* D(E_{jk})\big) = \sum_{j=1}^n \mathrm{Tr}( E_{j\oplus 1, j\oplus 1} E_{jj}) = 0.
\]

The Choi-Jamio{\l}kowski matrix of $D$ is
\[
C_D = \sum_{j,k=1}^n E_{jk}\otimes D(E_{jk}) =  \sum_{j=1}^n E_{jj}\otimes E_{jj}.
\]

We have
\[
C_\mathrm{Id} = \sum_{j,k=1}^n E_{jk}\otimes E_{jk}.
\]

We are interested in the $4$ parameter family of linear maps $\alpha P + \beta D + \gamma T+ \delta \mathrm{Id}$, $\alpha,\beta,\gamma,\delta\in\mathbb{R}$.
These are exactly the linear maps on $M_n(\mathbb{C})$ that are invariant under the action of the hyperoctahedral group as signed permutations and have been considered from that perspective in \cite{jppy}.
We wish to decide to which cone such a map belongs, depending on the values of the coefficients.
The Choi-Jamio{\l}kowski matrices $C_P,C_D, C_T, C_\mathrm{Id}\in M_n\otimes M_n$ commute, and therefore we can simultaneously diagonalise these four matrices.
The minimal polynomials of $C_D,C_T,C_\mathrm{Id}$ have degree two, and these matrices have two distinct eigenvalues. Computing the traces, we also get the multiplicities.
\[
\begin{array}{|c||c|c|}
\hline
\text{Eigenvalues of } C_D & \rho=0 & \rho = 1 \\[3pt] \hline
\mathrm{dim}\,\mathrm{ker} (C_D-\rho I_n\otimes I_n) & n^2-n & n \\[3pt] \hline \hline
\text{Eigenvalues of } C_T & \sigma = -1 & \sigma=1 \\[3pt] \hline
\mathrm{dim}\,\mathrm{ker} (C_T-\sigma I_n\otimes I_n) & \frac{1}{2}(n^2-n) & \frac{1}{2}(n^2+n) \\[3pt] \hline \hline
\text{Eigenvalues of } C_\mathrm{Id} & \tau = 0 & \tau=n \\[3pt] \hline
\mathrm{dim}\,\mathrm{ker} (C_\mathrm{Id}-\tau I_n\otimes I_n) & n^2-1 & 1 \\[3pt] \hline
\end{array} 
\]
In particular, $C_\mathrm{Id}$ is a multiple of the orthogonal projection onto $\Omega=\sum_{j=1}^n e_j\otimes e_j$, which is also an eigenvector for the other matrices.

Denote by
\[
V(\rho,\sigma,\tau) = \{ v\in \mathbb{C}^n\otimes \mathbb{C}^n: C_Dv=\rho v, C_T v = \sigma v, C_\mathrm{Id} v = \tau v\}
\]
the joint eigenspaces of $C_D,C_T,C_\mathrm{Id}$.
We can verify that, $V(\rho,\sigma,\tau)$ is non-null corresponding to the four triples $(\rho,\sigma, \tau)= (0,-1,0), (0,1,0), (1,1,0), (1,1,n)$.

\begin{proposition}\label{4para-maps}
Let $\alpha,\beta,\gamma,\delta\in\mathbb{R}$ and set
\[
\Phi(\alpha,\beta,\gamma,\delta)=\alpha P + \beta D + \gamma T + \delta \mathrm{Id}.
\]
Then $\Phi$ is completely positive if and only if $\alpha,\beta,\gamma,\delta$ satisfy the inequality
\[
    \alpha \ge \max\{ n|\gamma|, - n(\beta+\gamma), -n(\beta+\gamma)-n^2\delta \}.
\]
\end{proposition}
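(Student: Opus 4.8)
The plan is to reduce complete positivity to a positive-semidefiniteness condition on the Choi--Jamio\l{}kowski matrix and then exploit the simultaneous diagonalisation already set up above. By Choi's theorem, $\Phi(\alpha,\beta,\gamma,\delta)$ is completely positive if and only if its Choi matrix
\[
C_\Phi = \alpha C_P + \beta C_D + \gamma C_T + \delta C_\mathrm{Id}
\]
is positive semidefinite. Since $C_P,C_D,C_T,C_\mathrm{Id}$ commute and are simultaneously diagonalisable, $C_\Phi$ is diagonal in the common eigenbasis, so $C_\Phi\ge 0$ if and only if its eigenvalue on each joint eigenspace $V(\rho,\sigma,\tau)$ is nonnegative.

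First I would record that $C_P=\frac{1}{n}I_n\otimes I_n$ acts as the scalar $\frac{1}{n}$ on \emph{every} vector, so on $V(\rho,\sigma,\tau)$ the matrix $C_\Phi$ acts as the scalar $\frac{\alpha}{n}+\beta\rho+\gamma\sigma+\delta\tau$. Evaluating this over the four nonzero joint eigenspaces $(\rho,\sigma,\tau)\in\{(0,-1,0),(0,1,0),(1,1,0),(1,1,n)\}$ identified above yields the four scalars
\[
\frac{\alpha}{n}-\gamma,\qquad \frac{\alpha}{n}+\gamma,\qquad \frac{\alpha}{n}+\beta+\gamma,\qquad \frac{\alpha}{n}+\beta+\gamma+n\delta.
\]

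Requiring all four to be nonnegative gives $\alpha\ge n\gamma$, $\alpha\ge -n\gamma$, $\alpha\ge -n(\beta+\gamma)$, and $\alpha\ge -n(\beta+\gamma)-n^2\delta$; combining the first two into $\alpha\ge n|\gamma|$ produces exactly the claimed inequality, and conversely that inequality forces all four scalars to be nonnegative, giving the equivalence. The computation is essentially mechanical once the joint spectral decomposition is in hand. The only point requiring genuine care is verifying that the four listed eigenspaces exhaust $\mathbb{C}^n\otimes\mathbb{C}^n$, so that no additional eigenvalue constraint is silently dropped; this is confirmed by a dimension count against the multiplicities in the table above (the four joint eigenspaces have dimensions $\tfrac12(n^2-n)$, $\tfrac12(n^2-n)$, $n-1$, and $1$, summing to $n^2$). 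That completeness check, rather than any delicate estimate, is the main thing to pin down.
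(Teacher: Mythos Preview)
Your argument is correct and follows essentially the same route as the paper: reduce to positivity of the Choi matrix, use the simultaneous diagonalisation of $C_P,C_D,C_T,C_\mathrm{Id}$ over the four joint eigenspaces $V(\rho,\sigma,\tau)$, and read off the four eigenvalues $\frac{\alpha}{n}\pm\gamma$, $\frac{\alpha}{n}+\beta+\gamma$, $\frac{\alpha}{n}+\beta+\gamma+n\delta$. Your explicit dimension count confirming that the four eigenspaces exhaust $\mathbb{C}^n\otimes\mathbb{C}^n$ is a welcome addition that the paper leaves implicit.
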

\begin{proof}
We check that the Choi-Jamio{\l}kowski matrix $C_{\Phi(\alpha,\beta,\gamma,\delta)}=\alpha C_P + \beta C_D + \gamma C_T + \delta C_\mathrm{Id}$ of $C_{\Phi(\alpha,\beta,\gamma,\delta)}$ has eigenvalues
\[
\frac{\alpha}{n} - \gamma,\quad \frac{\alpha}{n} + \gamma, \quad \frac{\alpha}{n}+\beta+\gamma, \quad \frac{\alpha}{n}+\beta+\gamma+n\delta,
\]
corresponding to the four triples $(\rho,\sigma,\tau)=(0,-1,0),(0,1,0),(1,1,0),(1,1,n)$ with non-trivial eigenspaces.
\end{proof}

Restricting to equivariant maps as in the terminology of \cite{bardet+collins+sapra} (see also \cite[Theorem 2.2]{collins+osaka+sapra}), we can also characterize the $k$-positivity criterion for subfamilies of the above mentioned family.

%%%%%%%%%%%%%%%%%%%%%%%%%%%%%%%%%%%%%%%
\subsubsection{$(1,0)$-unitarily equivariant case - linear combinations of identity and depolarising channel}
The identity map and the depo\-larising channel are $(1,0)$-unitarily equivariant (in the terminology of \cite[Definition 1.1 (iii)]{bardet+collins+sapra}. For $\Phi_{\alpha,\delta}=\alpha P+ \delta{\rm Id}$, $\Phi_{\alpha,\delta}(X) = \delta X + \frac{\alpha}{n} \mathrm{Tr}(X) I_n$ we
\[
\Phi_{\alpha,\delta}(UXU^*) = U \Phi_{\alpha,\delta}(X)U^*,
\]
for all $U,X\in M_n$ with $U$ unitary.

Let $1\le k\le n$.
By \cite[Theorem 2.4]{bardet+collins+sapra}, $\Phi_{\alpha,\delta}$ is $k$-positive, if and only if
\[
C^{(k)}_{\Phi_{\alpha,\delta}} = \sum_{i,j=1}^k E_{ij}\otimes \Phi_{\alpha,\beta}(E_{ij}) = \sum_{i,j=1}^k E_{ij}\otimes = \frac{\alpha}{n} I^{(k)}_n\otimes I_n + \delta \sum_{i,j=1}^k E_{ij}\otimes E_{ij} \in M_n\otimes M_n  
\]
is positive, where $I^{(k)}_n=\sum_{i=1}^k E_{ii}\in M_n$.

Note that $C^{(k)}_{\Phi_{0,1}} =\sum_{i,j=1}^k E_{ij}\otimes E_{ij}$ commutes with $I_n^{(k)}\otimes I_n$, satisfies $\left(C^{(k)}_{\Phi_{0,1}}\right)^2= k C^{(k)}_{\Phi_{0,1}}$, and is a multiple of the orthogonal projection onto $\Omega_k=\sum_{i=1}^k e_i\otimes e_i$. One can show that the eigenvalues of  $C^{(k)}_{\Phi_{\alpha,\delta}}=\frac{\alpha}{n} I^{(k)}_n\otimes I_n + \delta C^{(k)}_{\Phi_{0,1}}$ are given by
\[
\mathrm{spec}(C^{(k)}_{\Phi_{0,1}}) = \left\{0,\frac{\alpha}{n},\frac{\alpha}{n}+k\delta\right\}.
\]

We summarize our results in the following lemma.
\begin{lemma}\label{lem-10eq}
(\cite[Theorem 2]{tom})
Let $\alpha,\delta\in\mathbb{R}$.
The linear map $\Phi_{\alpha,\delta}$ is $k$-positive iff the matrix $C^{(k)}_{\Phi_{\alpha,\delta}}$ is positive iff $\alpha,\delta$ satisfy the following two inequalities
\[
\alpha \ge 0 \quad\text{ and }\quad \delta \ge - \frac{\alpha}{kn}.
\]
\end{lemma}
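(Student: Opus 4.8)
The plan is to reduce the $k$-positivity of $\Phi_{\alpha,\delta}$ to a positive-semidefiniteness condition on the truncated Choi matrix $C^{(k)}_{\Phi_{\alpha,\delta}}$ via \cite[Theorem 2.4]{bardet+collins+sapra}, and then to determine the spectrum of this matrix explicitly. Since $C^{(k)}_{\Phi_{\alpha,\delta}}$ is Hermitian for real $\alpha,\delta$, its positivity is equivalent to all of its eigenvalues being non-negative; so once the spectrum is in hand, the two claimed inequalities will simply be the non-negativity of the two non-trivial eigenvalues, with the zero eigenvalue imposing no constraint.

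First I would write $C^{(k)}_{\Phi_{\alpha,\delta}} = \frac{\alpha}{n} I^{(k)}_n\otimes I_n + \delta\,|\Omega_k\rangle\langle\Omega_k|$, using that $C^{(k)}_{\Phi_{0,1}}=\sum_{i,j=1}^k E_{ij}\otimes E_{ij}=|\Omega_k\rangle\langle\Omega_k|$ is the rank-one operator built from the unnormalised vector $\Omega_k=\sum_{i=1}^k e_i\otimes e_i$. The key structural point is that the two summands commute: $I^{(k)}_n\otimes I_n$ is the orthogonal projection onto $\mathrm{span}(e_1,\dots,e_k)\otimes\mathbb{C}^n$, and $\Omega_k$ lies in this subspace, so $(I^{(k)}_n\otimes I_n)\Omega_k=\Omega_k$. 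Hence the two operators are simultaneously diagonalisable.

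Next I would decompose $\mathbb{C}^n\otimes\mathbb{C}^n$ into three joint eigenspaces. On the line spanned by $\Omega_k$ the map acts as $\frac{\alpha}{n}+k\delta$, since $|\Omega_k\rangle\langle\Omega_k|\Omega_k=\langle\Omega_k,\Omega_k\rangle\,\Omega_k=k\Omega_k$. On the orthogonal complement of $\Omega_k$ inside $\mathrm{span}(e_1,\dots,e_k)\otimes\mathbb{C}^n$, of dimension $kn-1$, the projection $|\Omega_k\rangle\langle\Omega_k|$ vanishes while $I^{(k)}_n\otimes I_n$ acts as the identity, giving eigenvalue $\frac{\alpha}{n}$. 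Finally, on the kernel $\mathrm{span}(e_{k+1},\dots,e_n)\otimes\mathbb{C}^n$, of dimension $(n-k)n$, both terms vanish, giving eigenvalue $0$. A dimension count $1+(kn-1)+(n-k)n=n^2$ confirms the decomposition, and this recovers the stated spectrum $\{0,\frac{\alpha}{n},\frac{\alpha}{n}+k\delta\}$.

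Positivity of $C^{(k)}_{\Phi_{\alpha,\delta}}$ is then equivalent to $\frac{\alpha}{n}\ge 0$ and $\frac{\alpha}{n}+k\delta\ge 0$, which rearrange to $\alpha\ge 0$ and $\delta\ge-\frac{\alpha}{kn}$. The main, though modest, obstacle is the eigenspace bookkeeping: one must verify $(I^{(k)}_n\otimes I_n)\Omega_k=\Omega_k$ so that the splitting is genuinely into \emph{joint} eigenspaces, and confirm that no eigenvalue is overlooked. In particular, for $k=n$ the zero eigenspace is absent, but the surviving condition $\delta\ge-\frac{\alpha}{n^2}$ is precisely the $k=n$ instance of the formula, so the statement is uniform in $k$.
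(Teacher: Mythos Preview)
Your proposal is correct and follows essentially the same approach as the paper: reduce $k$-positivity to positivity of the truncated Choi matrix via \cite[Theorem 2.4]{bardet+collins+sapra}, observe that $C^{(k)}_{\Phi_{0,1}}=|\Omega_k\rangle\langle\Omega_k|$ commutes with $I^{(k)}_n\otimes I_n$, and read off the spectrum $\{0,\frac{\alpha}{n},\frac{\alpha}{n}+k\delta\}$. Your explicit decomposition into three joint eigenspaces with a dimension count is slightly more detailed than the paper's discussion, but the argument is the same.
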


%%%%%%%%%%%%%%%%%%%%%%%%%%%%%%%%%%%%%%%
\subsubsection{$(0,1)$-unitarily equivariant case - linear combinations of transposition and depolarising channel}
The transposition and the depolarising channel are $(0,1)$-unitarily equivariant (in the terminology of \cite[Definition 1.1 (iii)]{bardet+collins+sapra}. For $\Psi_{\alpha,\gamma}=\alpha P + \gamma T$, $\Psi_{\alpha,\gamma}(X) = \gamma X^T + \frac{\alpha}{n}\mathrm{Tr}(X) I_n$, we have
\[
\Psi_{\alpha,\gamma}(UXU^*) = \overline{U}\Psi_{\alpha,\gamma}(X)U^T,
\]
for all $U,X\in M_n$ with $U$ unitary.

By \cite[Theorem 2.4]{bardet+collins+sapra}, $\Psi_{\alpha,\gamma}$ is $k$-positive, if and only if
\[
C^{(k)}_{\Psi_{\alpha,\gamma}} = \sum_{i,j=1}^k E_{ij}\otimes \Psi_{\alpha,\gamma}(E_{ij}) = \frac{\alpha}{n} I^{(k)}_n\otimes I_n + \gamma \sum_{i,j=1}^k E_{ij}\otimes E_{ji} \in M_n\otimes M_n  
\]
is positive, where $I^{(k)}_n=\sum_{i=1}^k E_{ii}\in M_n$.

Note that $C^{(k)}_{\Psi_{0,1}}=\sum_{i,j=1}^k E_{ij}\otimes E_{ji}$ satisfies
\[
I^{(k)}_n \otimes I_n  C^{(k)}_{\Psi_{0,1}} =C^{(k)}_{\Psi_{0,1}} =C^{(k)}_{\Psi_{0,1}} I^{(k)}_n\otimes I_n, 
\]
and $(C^{(k)}_{\Psi_{0,1}})^2=\sum_{i,j=1}^k E_{ii}\otimes E_{jj}=I^{(k)}_n\otimes I^{(k)}_n$. which implies
\[
\mathrm{spec}(C^{(1)}_{\Psi_{0,1}})=\{0,1\}\quad \text{ and } \quad
\mathrm{spec}(C^{(k)}_{\Psi_{0,1}}) = \{-1,0,1\} \text{ for } k\ge 2.
\]
We see that the eigenvalues of $C^{(k)}_{\Psi_{\alpha,\gamma}}=\frac{\alpha}{n} I^{(k)}_n\otimes I_n + \gamma C^{(k)}_{\Psi_{0,1}}$ are
\[
\mathrm{spec}(C^{(1)}_{\Psi_{\alpha, \gamma}})=\left\{0,\frac{\alpha}{n}, \frac{\alpha}{n}+\gamma \right\} \quad \text{and} \quad
\mathrm{spec}(C^{(k)}_{\Psi_{\alpha,\gamma}})=\left\{0, \frac{\alpha}{n},\frac{\alpha}{n}+\gamma,\frac{\alpha}{n}-\gamma \right\}
\]
for $k\ge 2$.

Therefore we have the following lemma.

\begin{lemma}\label{lem-01eq}
(\cite[Theorem 3]{tom})
Let $\alpha,\gamma\in\mathbb{R}$.
The linear map $\Psi_{\alpha,\gamma}$ is 
\begin{enumerate}
\item[i.]
$1$-positive iff $\alpha\ge 0$ and $\alpha\ge -n\gamma$.
\item[ii.]
$k$-positive for $k\ge 2$, iff $\alpha \ge n|\gamma|$.
\end{enumerate}
\end{lemma}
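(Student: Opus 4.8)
The plan is to reduce $k$-positivity to a positive-semidefiniteness condition on the truncated Choi matrix and then simply read off the inequalities from the eigenvalues already computed. First I would invoke \cite[Theorem 2.4]{bardet+collins+sapra} to reduce the $k$-positivity of $\Psi_{\alpha,\gamma}$ to the positivity of the partial Choi matrix $C^{(k)}_{\Psi_{\alpha,\gamma}}$, exactly as recorded in the discussion above. Since this matrix is Hermitian, it is positive semidefinite precisely when all of its eigenvalues are nonnegative, so the entire problem becomes a bookkeeping of sign conditions on the spectrum.

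Next I would treat the two cases separately, using the spectra displayed just before the statement. For $k=1$ the eigenvalues are $0$, $\tfrac{\alpha}{n}$, and $\tfrac{\alpha}{n}+\gamma$; requiring each to be nonnegative gives $\alpha\ge 0$ together with $\alpha\ge -n\gamma$, which is exactly part (i). For $k\ge 2$ the only additional eigenvalue is $\tfrac{\alpha}{n}-\gamma$, so the extra constraint $\alpha\ge n\gamma$ appears; combining $\alpha\ge -n\gamma$ and $\alpha\ge n\gamma$ yields $\alpha\ge n|\gamma|$. I would then observe that $\alpha\ge n|\gamma|$ automatically forces $\alpha\ge 0$, so the single inequality $\alpha\ge n|\gamma|$ captures part (ii).

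Honestly there is no genuine obstacle here: the substantive work, namely diagonalizing $C^{(k)}_{\Psi_{0,1}}$ and locating the three-or-four eigenvalues of $\tfrac{\alpha}{n}I^{(k)}_n\otimes I_n+\gamma C^{(k)}_{\Psi_{0,1}}$, was already carried out in the preceding paragraphs. The one point deserving a word of care is why the eigenvalue $-1$ of $C^{(k)}_{\Psi_{0,1}}$ genuinely occurs for $k\ge 2$ but not for $k=1$, since this is precisely what distinguishes the symmetric behaviour in $\gamma$ of part (ii) from the one-sided behaviour of part (i). I would justify this by recalling that $(C^{(k)}_{\Psi_{0,1}})^2=I^{(k)}_n\otimes I^{(k)}_n$, so the nonzero eigenvalues of $C^{(k)}_{\Psi_{0,1}}$ can only be $\pm 1$, and that the antisymmetric eigenspace carrying the eigenvalue $-1$ is nontrivial exactly when $k\ge 2$, which is the sole place where the two cases diverge.
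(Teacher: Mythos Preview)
Your proposal is correct and follows exactly the same approach as the paper: invoke \cite[Theorem 2.4]{bardet+collins+sapra} to reduce $k$-positivity to positive semidefiniteness of $C^{(k)}_{\Psi_{\alpha,\gamma}}$, then read off the sign conditions from the eigenvalues already computed in the preceding discussion. Your additional remark explaining why the eigenvalue $-1$ of $C^{(k)}_{\Psi_{0,1}}$ appears precisely when $k\ge 2$ is a welcome clarification but not strictly needed, since the paper has already recorded the distinct spectra for $k=1$ and $k\ge 2$.
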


Now we discuss as an example the semigroup generated by $4$ parameter family $L= \alpha P + \beta D + \gamma T + \delta \mathrm{Id}$ and time evolution for specific cases in $M_2$. 
As all the operators $P, D, T, \mathrm{Id}$ commute, the semigroup generated is given by
\begin{align*}
\exp{(tL)}=  &
e^{t(\beta +\gamma + \delta)}(e^{t\alpha}-1)P + e^{t(\gamma + \delta)}(e^{t\beta}-1)D + e^{t\delta}\frac{(e^{t\gamma}-e^{-t\gamma})}{2}T \\
& + e^{t\delta}\frac{(e^{t\gamma}+e^{-t\gamma})}{2}\mathrm{Id}
\end{align*}

%%%%%%%%%%%%%%%%%%%%%%%%%%%%%%%%%%%%%%%
\subsection{Semigroups generated by the depolarising channel}

We consider the semigroup $T_{\alpha, \delta}= \exp{t(\alpha P + \delta \mathrm{Id})}$.
This semigroup is again a linear combination of the operator $P$ and identity,
\[
T_{\alpha,\delta}(t) = \exp\big(t(\alpha P + \delta\mathrm{Id})\big) = \underbrace{e^{\delta t}(e^{\alpha t} -1)}_{=:\alpha(t)} P + \underbrace{e^{\delta t}}_{=:\delta(t)}\mathrm{Id},
\]
which, by Lemma \ref{lem-10eq}, is $k$-positive iff 
\[
\alpha(t) \ge 0 \quad \text{ and } \quad \delta(t) \ge -\frac{\alpha(t)}{kn}
\]
This condition is satisfied for all $t\ge 0$ iff $e^{\alpha t}\ge 1$ for all $t\ge 0$ iff $\alpha \ge 0$.

If we consider the identity preserving semigroup generated by the operator $P$ and the identity operator i.e. the semigroup $T_{\alpha, -\alpha}$.
we can see that it is a convex combination of $P$ and the identity opearator
\[
T_{\alpha, -\alpha}= \exp{t\alpha(P-\mathrm{Id})}= (1-e^{-t\alpha})P + e^{-t\alpha}\mathrm{Id}.
\]
which is completely positive for all $t\geq 0$ iff $\alpha \ge 0$.
What is more, in this case the semigroup converges to $P$ as $t \to \infty$, which we know by Lemma \ref{lem-D} to be an interior point of the cone $\mathcal{EB}$ or $1$-superpositive.
So if $\alpha \ge 0$, the semigroup $T_{\alpha,-\alpha}$ enters the cone $\mathcal{EB}$ in finite time $t_1> 0$.
In the special case $n=2$, the Choi-Jamio{\l}kowski matrix of the semigroup is given by
\[
C_{T_{\alpha,-\alpha}}=
\begin{bmatrix}
\frac{1}{2}(1+e^{-t\alpha}) & 0 & 0 & e^{-t\alpha} \\
0 & \frac{1}{2}(1-e^{-t\alpha}) & 0 & 0 \\
0 & 0 & \frac{1}{2}(1-e^{-t\alpha}) & 0 \\
e^{-t\alpha} & 0 & 0 & \frac{1}{2}(1+e^{-t\alpha})
\end{bmatrix},
\]

To decide when the semigroup enters $\mathcal{EB}$ we use Peres-Horodeci (or PPT) criterion.
If $T$ is the transpose map then 
\[
(\mathrm{Id}\otimes T)C_{\exp{(t L)}}=
\begin{bmatrix}
\frac{1}{2}(1+e^{-t\alpha}) & 0 & 0 & 0 \\
0 & \frac{1}{2}(1-e^{-t\alpha}) & e^{-t\alpha} & 0 \\
0 & e^{-t\alpha} & \frac{1}{2}(1-e^{-t\alpha}) & 0 \\
0 & 0 & 0 & \frac{1}{2}(1+e^{-t\alpha})
\end{bmatrix}.
\]
So $T_{\alpha, -\alpha}$ is in $\mathcal{EB}$ if and only if $C_{T_{\alpha, -\alpha}}$ is separable if and only if $(\mathrm{Id}\otimes T)C_{T_{\alpha, -\alpha}}$ is positive, by the PPT criterion, see \cite{per,hhh}.
We can easily check that the above matrix is positive iff the determinant
\[
\begin{vmatrix}
\frac{1}{2}(1-e^{-t\alpha}) & e^{-t\alpha} \\
e^{-t\alpha}    & \frac{1}{2}(1-e^{-t\alpha})
\end{vmatrix}
= -\frac{3}{4}e^{-2t\alpha}-2e^{-t\alpha}+1.
\]
is positive.
Replacing $x= e^{-t\alpha}$ we observer that the polynomial $-\frac{3}{4}x^2-2x+1$ has the positive root $x_0:=-\frac{4}{3}+2\frac{\sqrt{7}}{3}$ in the interval $[0,1]$.
The polynomial is positive on the interval $[0,x_0]$ and negative on $[x_0, 1]$.
Thus the semigroup enters the cone $\mathcal{S}_1$ at the time
\begin{equation*}
t_1 = - \frac{1}{\alpha}\ln{\left(-\frac{4}{3}+\frac{2\sqrt{7}}{3}\right)}.
\end{equation*}

%%%%%%%%%%%%%%%%%%%%%%%%%%%%%%%%%%%%%%%
\subsection{Semigroups generated by the depolarizing channel and transposition}

We consider the identity preserving semigroup generated by the transposition and the depolarizing channel i.e. by the generator $L=\alpha P + \gamma T - (\alpha + \gamma)\mathrm{Id}$, for $\alpha, \gamma \in \mathbb{R}$.
Since all the quantities $T, P, Id$ commutes we have 
\begin{align*}
\exp(tL) &= \exp (t\gamma(T-Id))\exp(t\alpha(P-Id)) \\
&= e^{-t\alpha}\frac{1+e^{-2t\gamma}}{2}Id + e^{-t\alpha}\frac{1-e^{-2t\gamma}}{2}T + (1-e^{-t\alpha})P,
\end{align*}
which is a convex combination of the $\mathrm{Id}, T$ and $P$.

If we take $\alpha, \gamma >0$, then in general it is a positive semigroup but not necessarily completely positive.

If the parameter $\alpha=0$ then the semigroup is just convex combination of $\mathrm{Id}$ and $T$, which converges to $\frac{1}{2}(\mathrm{Id}+T)$ as $t \to \infty$.

If $\alpha>0$, then the semigroup eventually becomes completely positive, even $1$-superpositive as it converges to $P$.

We compute the Choi-Jamio{\l}kowski matrix of of the semigroup $\exp{(t L)}$,
\begin{equation}
C_{\exp{(t L)}}= \rho_t C_{\mathrm{Id}} + \mu_t C_T + \nu_t C_P,
\end{equation}
where $\rho_t=e^{-t\alpha}\frac{1+e^{-2t\gamma}}{2}$, $\mu_t =e^{-t\alpha}\frac{1-e^{-2t\gamma}}{2}$ and $\nu_t=(1-e^{-t\alpha})$.
Using the same arguments as in Proposition \ref{4para-maps}, the Choi-Jamio{\l}kowski matrix is positive iff the eigenvalues $\frac{\nu_t}{n}-\mu_t$ and $\frac{\nu_t}{n}+\mu_t$ are positive.
Combining these two conditions, we conclude that the semigroup $\exp{(tL)}$ becomes CP at time $t$ iff
\[
\frac{\nu_{t}}{n}\ge |\mu_{t}| \quad \text{ i.e. } \quad 2(e^{\alpha t}-1) \ge n|1-e^{-2t \gamma}|.
\]
The above inequaility shows that even if $\gamma >> \alpha> 0$ it is possible that the semigroup not CP for certain time but ultimately becomes CP and then superpositive after finite time.  
If $\gamma=\alpha >0$ then substituting $x=e^{\alpha t}$ in the above inequality gives the following criterion,
\[
2x^3-(2+n)x^2+n \ge 0.
\]
The polynomial $p(x)= 2x^3-(2+n)x^2+n$ has two positive roots- 1 and $\frac{n+\sqrt{n^2+8n}}{4}$, and $p(x)\ge 0$ for $x\ge \frac{n+\sqrt{n^2+8n}}{4}$.
The root $1$ corresponds to the time $t=0$. 
So the semigroup becomes CP at time $t_1= \frac{1}{\alpha}\ln{\frac{n+\sqrt{n^2+8n}}{4}}$.

We can find the time when the semigroup becomes $1$-superpositive using again PPT criterion for the case $M_2(\mathbb{C})$.
\[
(\mathrm{Id}\otimes T)C_{\exp{t L}}=
\begin{bmatrix}
\rho_t + \mu_t + \frac{1}{2}\nu_t & 0 & 0 & \mu_t\\
0 & \frac{1}{2}\nu_t & \rho_t & 0\\
0 & \rho_t & \frac{1}{2}\nu_t & 0\\
\mu_t & 0 & 0 & \rho_t + \mu_t + \frac{1}{2}\nu_t
\end{bmatrix}.
\]
The above matrix is positive iff the determinant 
\[
\begin{vmatrix}
\frac{1}{2}\nu_t & \rho_t\\
\rho_t & \frac{1}{2}\nu_t
\end{vmatrix}
= \frac{1}{4}\nu_t^2-\rho_t^2
\]
is positive, which gives the condition 
\begin{equation}
2 \leq e^{t \alpha} - e^{-2 t \gamma}.
\end{equation}
If $\gamma=\alpha >0$ then substituting $x=e^{t\alpha}$ in the abive inequality gives that
\[
x^3-2x^2-1\ge 0.
\]
If $\xi$ is the positive root of the polynomial then we see that at the time $t_2=\frac{1}{\nu}\ln{\xi}$ the semigroup becomes $1$-superpositive.

%%%%%%%%%%%%%%%%%%%%%%%%%%%%%%%%%%%%%%%%%%%%%%%%%%%%%%%%%%%%%%%%%%%%%%%%%%%%%%%%

%%%%%%%%%%%%%%%%%%%%%%%%%%%%%%%%%%%%%%%
\subsection*{Acknowledgements}
P.C.\ and U.F.\ were supported by the ANR Project No.\ ANR-19-CE40-0002. Bhat is suppported by the J C Bose Fellowship JBR/2021/000024 of SERB(India).

%%%%%%%%%%%%%%%%%%%%%%%%%%%%%%%%%%%%%%%
\subsection*{Author contribution} All authors wrote and reviewed the manuscript.

%%%%%%%%%%%%%%%%%%%%%%%%%%%%%%%%%%%%%%%
\subsection*{Funding}
The authors received no funding for the preparation of this manuscript besides the grants mentioned above.

%%%%%%%%%%%%%%%%%%%%%%%%%%%%%%%%%%%%%%%%%%%%%%%%%%%%%%%%%%%%%%%%%%%%%%%%%%%%%%%%
\section*{Declarations}

%%%%%%%%%%%%%%%%%%%%%%%%%%%%%%%%%%%%%%%
\subsection*{Conflict of interest} The authors declare that they have no conflict of interest.

%%%%%%%%%%%%%%%%%%%%%%%%%%%%%%%%%%%%%%%%%%%%%%%%%%%%%%%%%%%%%%%%%%%%%%%%%%%%%%%%

\end{document}